\theoremstyle{plain}
\tikzset{every picture/.style={line width=0.25pt}} 
\theoremstyle{definition}
\theoremstyle{remark}
\numberwithin{equation}{section}
\theoremstyle{break}
\newtheorem{defi}{Definition}[section]
\newtheorem{thm}{Theorem}[section]
\newtheorem{prop}{Proposition}[section]
\newtheorem{lem}{Lemma}[section]
\newtheorem{coro}{Corollary}[section]
\theoremstyle{remark}
\newtheorem{rem}{Remark}[section]
\newtheorem*{ackn}{\textbf{Acknowledgements}}
\newtheorem*{thm*}{Theorem}
\newcommand{\Sp}{\operatorname{Spec}}
\newcommand{\Au}{\operatorname{Aut}}
\newcommand{\End}{\operatorname{End}}
\newcommand{\GL}{\operatorname{GL}}
\newcommand{\art}{\operatorname{\bf Art _{\mathbb{C}}}}
\newcommand{\grp}{\operatorname{\textbf{Grp}}}
\newcommand{\set}{\operatorname{\textbf{Set}}}
\newcommand{\defo}{\operatorname{Def}}
\begin{document}

\title{Group actions on local moduli space of holomorphic vector bundles}

\author{}
\address{}
\email{ }
\thanks{ }

\author{ An-Khuong DOAN}
\address{An-Khuong DOAN, IMJ-PRG, UMR 7586, Sorbonne Université,  Case 247, 4 place Jussieu, 75252 Paris Cedex 05, France}
\email{an-khuong.doan@imj-prg.fr }
\thanks{ }

\subjclass[2010]{14D15, 14B10, 32G}

\date{December 26, 2021.}

\dedicatory{ }

\keywords{Deformation theory, Moduli theory, Equivariance structure, Semi-universality}

\begin{abstract}
We prove that actions of complex reductive Lie groups on a holomorphic vector bundle over a complex compact manifold are locally extendable to its local moduli space. 
\end{abstract}

\maketitle
\tableofcontents
\section{Introduction}

 Since the foundational work of Kodaira-Spencer \cite{9} and Kuranishi \cite{10} on the existence of local moduli space parametrizing all the nearby structures of complex compact manifolds with respect to a given one, many similar existence theorems of such a kind have been proved for various other geometric objects among which we can mention: algebraic varieties \cite{14}, complex compact spaces \cite{6}, isolated singularities \cite{7}, holomorphic vector bundles \cite{12}, etc. This space and the associated family are usually known under the names: Kuranishi space and Kuranishi family, respectively, or sometimes semi-universal deformation for the latter. A natural question to pose is whether the automorphism group of the object under deformation could be lifted to an action on the Kuranishi space, which is compatible with the associated family. This is indeed the case when the associated family is universal. However, if the object in question has non-trivial automorphisms, which often happens in pratice, then the family cannot be universal in general.
 
 There are several attempts to answer this question, among which the work of D. S. Rim is outstanding \cite{13}. Namely, he gave an affirmative answer for a large class of local moduli problems (or equivalently for a large class of functors of artinian rings in Schlessinger's language). A vivid corollary of this beautiful work is the existence of equivariant structure on semi-universal deformation of projective schemes equipped with linearly reductive actions, unique up to non-canonical equivariant isomorphism. This is even more surprising that a counterexample constructed in \cite{2} or \cite{4} confirms the formal un-extendability for the non-reductive case in general. This is the reason why we shall focus only on reductive group actions. 
 
The main disadvantage of Rim's method is that his constructions are merely formal and that it only works well the algebraic world but not in the analytic world in which we have to deal with deformations problems associated to analytic objects where the convergence is naturally needed. This is the principal difference between the two worlds. Therefore, the group extension problem seems even harder on the analytic side. In some specific situations, we can expect to prove the convergence. We can mention the case where the object under deformation is a complex compact manifold equipped with actions of a reductive complex Lie group, of which the main ingredient of the proof is a combination of an equivariant version of Kuranishi's classical construction of local moduli spaces of complex compact manifolds and representations of reductive complex Lie group (cf. \cite{3}). Inspired by this result, we continue to consider the case where the analytic object under deformation is a holomorphic vector bundle on which a reductive complex Lie group acts holomorphically. It should be noted as well that our main result in this paper is the existence of group operations on local moduli space for reductive subgroups of the automorphism group of the considered bundle without any further assumption on the bundle (cf. Theorem \ref{t4.1} and Corollary \ref{c4.1}) while N. Buchdahl, G. Schumacher in \cite{1} proved that the whole automorphism group can be lifted to a compatible group action on its local moduli space provided that the Kählerianness on the given complex manifold and the poly-stability assumption on the holomorphic vector bundle under consideration are added (see \cite[Theorem 5]{1}).  This suggests once again that the group extension problem is not feasible in general unless some additional hypothesis is imposed on either the group or on the considered geometric structure. It would be of great interest if one could find a counterexample in either case.
 
Let us now outline the organization of this article. First, we give a general description of holomorphic vector bundles and their deformations in $\S$\ref{s2} and $\S$\ref{s3}, respectively. Next, we prove the existence of reductive group actions on Kuranishi spaces of vector bundle in $\S$\ref{s4}. The main techniques are essentially inspired from those in \cite{3}. In $\S$\ref{s5}, we compute the differential graded Lie algebra associated to the deformation problem of holomorphic vector bundles, from which a formal version of reductive group actions on local moduli space of holomorphic vector bundles, obtained in $\S$\ref{s4}, follows easily. At last, in $\S$\ref{s6}, we introduce a general philosophy hidden behind our work.

 \begin{ackn} We would like to warmly thank Prof. S. Kosarew for many useful discussions and references. We show gratitude to Prof. Julien Grivaux for reading the first version of this note and giving several precious comments. Finally, we are specially thankful to the referee whose dedicated work led to a great amelioration of this paper.
 \end{ackn}
 \section{Holomorphic vector bundles}\label{s2}
 Let $E$ be a differentiable  vector bundle of rank $r$ over a compact complex compact manifold $X$. Let $A^{p,q}(E)$ be the  space of $(p,q)$-forms with values in $E$.
 \begin{defi} A semi-connection on $E$ is a $\mathbb{C}$-linear map $D:\;A^{0,0}(E) \rightarrow A^{0,1}(E) $ satisfying the Leibnitz rule, i.e. $$D(fs)=(\overline{\partial} f)s+f\cdot Ds$$
 for $f \in C^{\infty}(X)$ and $s\in A^{0,0}(E) $.
 \end{defi}
 It is evident that each semi-connection $D$ can be extended to a first order differential operator $D:\;A^{p,q}(E) \rightarrow A^{p,q+1}(E) $. Moreover, if we let $\mathcal{D}(E)$ be the space of semi-connections on $E$, then it is well-known that for a fixed semi-connection $D_0$,  $\mathcal{D}$ can be identified with  $D_0+ A^{0,1}(\End(E))$ where $\End(E)$ is the group of differentiable endomorphisms of $E$ (inducing the identity on the base manifold) and thus an affine space. 
 \begin{defi} A semi-connection $D$ is called a holomorphic structure if 
 $$D\circ D=0.$$
 This condition is called the integrability condition.
 \end{defi}
 Now, let $\mathcal{H}(E)$ be the subset of $\mathcal{D}(E)$, consisting of holomorphic semi-connections $D$. Then $\mathcal{H}(E)$ is nothing but the set of holomorphic bundle structure on the differentiable complex vector bundle $E$.
 If we denote the group of differentiable automorphisms of $E$ (inducing the identity on the base manifold $X$) by $\GL(E)$ then $\End(E)$ be thought of as the Lie algebra of $\GL(E)$ and an action of $\GL(E)$ on $\mathcal{D}(E)$ is given by $$g.D=g^{-1}\circ D \circ g$$ where $g \in \Au(E)$ and $D\in \mathcal{D}(E)$.
 \begin{rem} If $E$ is a holomorphic vector bundle whose holomorphic structure is uniquely determined by a holomorphic connection $D$ then $g.D=D$ for any holomorphic automorphism of $E$. 
 \end{rem}
 \begin{prop}\label{p2.1} Let $D$ be a holomorphic connection on $E$ and $\alpha$, $\alpha_1$, $\alpha_2 \in A^{0,1}(\End(E))$.
 \begin{enumerate} 
\item[(i)]  $D+\alpha$ defines a structure of holomorphic vector bundle on $E$ if and only if 
 $$\mathcal{P}_{D}(\alpha):= D\alpha +\alpha \wedge \alpha=0$$ where the wedge product $\alpha \wedge \alpha$ is given by the usual wedge product in the form part and the usual composition of endomorphisms in $\End(E)$.
 \item[(ii)] Let $g \in \GL(E)$. Then $g.(D+\alpha_2)=D+\alpha_1$ if and only if $$g\circ \alpha_1 -\alpha_2 \circ g -Dg=0.$$
 \end{enumerate}  
 \end{prop}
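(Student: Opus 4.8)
Both statements are purely local, first-principles computations, so the plan is to verify them directly from the definition of the extended operator $D$ and the Leibniz rule, with no analytic input required. The one structural ingredient I would isolate first is that a semi-connection $D$ on $E$ induces one (still written $D$) on $\End(E)$, characterized by the graded Leibniz identity: for $\phi\in A^{0,q}(\End(E))$ and an $E$-valued form $t$ one has $D(\phi\wedge t)=(D\phi)\wedge t+(-1)^{q}\phi\wedge(Dt)$, where on the right the product wedges the form parts and applies endomorphisms to the $E$-part. In degree $q=0$ this specializes to $D(gs)=(Dg)s+g(Ds)$ for $g\in\GL(E)$ and $s\in A^{0,0}(E)$, and in degree $q=1$ to $D(\alpha s)=(D\alpha)s-\alpha\wedge(Ds)$. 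Fixing these conventions once is what makes both parts mechanical.

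For (i), I would compute the second-order operator $(D+\alpha)\circ(D+\alpha)$ on a section $s$. Writing $(D+\alpha)s=Ds+\alpha s$ and applying $D+\alpha$ again — extended to $E$-valued $(0,1)$-forms by $\beta\mapsto D\beta+\alpha\wedge\beta$ — produces four terms: $D(Ds)$, $\alpha\wedge(Ds)$, $D(\alpha s)$, and $\alpha\wedge(\alpha s)$. The first vanishes because $D$ is a holomorphic connection, i.e. $D\circ D=0$; the Leibniz rule expands $D(\alpha s)=(D\alpha)s-\alpha\wedge(Ds)$, whose cross term exactly cancels the second term $\alpha\wedge(Ds)$; and $\alpha\wedge(\alpha s)=(\alpha\wedge\alpha)s$ by associativity of composition in $\End(E)$. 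What survives is $(D\alpha+\alpha\wedge\alpha)s=\mathcal{P}_{D}(\alpha)\cdot s$, so $(D+\alpha)^{2}$ is the zeroth-order operator given by multiplication by the $\End(E)$-valued $(0,2)$-form $\mathcal{P}_{D}(\alpha)$. Since such a multiplication operator is the zero map if and only if the form itself vanishes, the integrability condition $(D+\alpha)\circ(D+\alpha)=0$ is equivalent to $\mathcal{P}_{D}(\alpha)=0$.

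For (ii), I would simply unwind the action $g.(D+\alpha_{2})=g^{-1}\circ(D+\alpha_{2})\circ g$ on a section $s$. Using $D(gs)=(Dg)s+g(Ds)$ gives $g^{-1}\bigl(D(gs)\bigr)=g^{-1}(Dg)s+Ds$, while the conjugated zeroth-order part is $g^{-1}\circ\alpha_{2}\circ g\cdot s$; hence $g.(D+\alpha_{2})=D+g^{-1}(Dg)+g^{-1}\circ\alpha_{2}\circ g$. Equating this with $D+\alpha_{1}$ reduces to $\alpha_{1}=g^{-1}(Dg)+g^{-1}\circ\alpha_{2}\circ g$, and left-multiplying by $g$ yields the stated identity $g\circ\alpha_{1}-\alpha_{2}\circ g-Dg=0$; every step is reversible, giving the equivalence. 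The only place demanding care — and thus the sole "obstacle" — is the bookkeeping of the two distinct products (wedge on the form slot, composition on the $\End(E)$ slot) together with the sign $(-1)^{q}$ in the Leibniz rule; once these are pinned down, the cross terms cancel as above and both identities fall out without any convergence or functional-analytic considerations.
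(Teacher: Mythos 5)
Your proof is correct. The paper itself states Proposition \ref{p2.1} without any proof (it is taken as standard, following Miyajima \cite{12}), so there is no argument in the text to compare against; your direct computation --- expanding $(D+\alpha)\circ(D+\alpha)$ on sections to identify the curvature term $\mathcal{P}_D(\alpha)$, and unwinding $g^{-1}\circ(D+\alpha_2)\circ g$ via the Leibniz rule for the induced semi-connection on $\End(E)$ --- is precisely the standard argument that fills this gap, with the signs and the two products (wedge on forms, composition on endomorphisms) handled correctly.
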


 \section{Deformation of holomorphic vector bundles}\label{s3}
We first recall some basic definitions in deformation theory of holomorphic vector bundles (cf. \cite{12} for more details). Let $\mathfrak{B}$ be the category of germs of pointed complex spaces $(B,0)$ (a complex space with a reference point) whose associated reduced complex space is a point and  $X$ a complex compact manifold. Let $E$ be a holomorphic vector bundle over $X$, whose associated holomorphic semi-connection and underlying differentiable complex vector bundle will be denoted by $D_E$ and $\underline{E}$, respectively. We fix once for all a sufficiently large integer $k$. Consider the Hilbert space $ A^{0,1}(\End(E))_k$ (resp. $ A^{0,2}(\End(E))_{k-1}$)  obtained by completing $ A^{0,1}(\End(E)))$ (resp. $A^{0,2}(\End(E))))$  with respect to the Sobolev $k$-norm (resp $k-1$-norm). We have an induced analytic map coming from Proposition \ref{p2.1} 
\begin{align*}
\mathcal{P}_{D_E} :A^{0,1}(\End(E))_k &\rightarrow A^{0,2}(\End(E))_{k-1}\\
\alpha &\mapsto D_E.\alpha +\alpha \wedge \alpha
\end{align*} which then gives a germ of a Banach analytic space $(\mathcal{P}_{D_E}^{-1}(0), 0)$.
\begin{defi} 
\begin{enumerate} \label{d3.1}
\item[(i)] \label{d3.1i} A local deformation of $E$ over a germ of pointed complex spaces $(B,0)$ is a pair $(\pi,(B,0) )$ where $\pi$ a holomorphic map $\pi$ from $(B,0)$ to the germ of Banach analytic spaces $(\mathcal{P}_{D_E}^{-1}(0), 0)$ which is of class $C^{\infty}$ on $X\times D$ where $D$ is the ambient space of $(B,0)$.
\item[(ii)]\label{d3.1i} Two local deformations $(\pi,(B,0) )$ and $(\sigma,(B,0) )$ of $E$ are equivalent if there exists an analytic map $$\rho:\;(B,0 ) \rightarrow (\GL(E)_{k+1},\mathrm{Id}_E)$$ which is of class $C^{\infty}$ such that
$$\rho(t)\circ \pi(t) -\pi(t) \circ \rho(t) -D\rho(t)=0 $$
in $A^{0,1}(\End(E))$.
\end{enumerate}
\end{defi}
\begin{rem}In other words, if  $(\pi,(B,0) )$ a local deformation of $E$, the  we obtain a family of holomorphic vector bundles $\lbrace E_{\pi(b)} \rbrace_{b \in B}$ varying holomorphically in $b$. Here, $E_{\pi(b)}$ is the differentiable complex vector bundle $\underline{E}$ equipped with the holomorphic structure $D_{E}+\pi(b)$. In addition, it defines a holomorphic vector bundle $\mathcal{E}_{B} \rightarrow  X\times B$ such that the restriction on $X\times \lbrace 0\rbrace$ is nothing but the initial holomorphic vector bundle $E$. We call $\mathcal{E}_{B} \rightarrow  X\times B$ the associated bundle to the local deformation $(\pi,(B,0) )$.

\end{rem}
If $(\pi, (B,0))$ is a local deformation of $E$ and $f:\;(S,0)\rightarrow (B,0)$ is an analytic map of germs of complex spaces then the pullback of $(\pi, (B,0))$ by $f$ is defined to be the local deformation $(\pi\circ f,(S,0))$, which we shall denote by $f^*(\pi,(B,0))$.

\begin{defi} A local deformation $(\pi,(B,0))$ of $E$ is semi-universal if any other local deformation $(\rho,(S,0))$ of $E$ is defined by the pullback of $(\pi,(B,0))$ under some holomorphic map from $(T,0)$ to $(S,0)$, whose differential at the reference point is unique.    
\end{defi}
The following fundamental theorem is essentially due M. Kuranishi (\cite[Theorem 1]{12}).
\begin{thm} Let $E$ be a holomorphic vector bundle defined over a compact complex manifold $X$. Then there exists a semi-universal local deformation of $E$, unique up to non-canonical isomorphisms. 
\end{thm}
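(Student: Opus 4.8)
The plan is to carry out the analytic Kuranishi construction directly on the Banach analytic germ $(\mathcal{P}_{D_E}^{-1}(0),0)$, cutting out a finite-dimensional semi-universal slice by imposing a gauge-fixing condition. First I would fix Hermitian metrics on $X$ and on $E$, and form the formal adjoint $D_E^*$ of $D_E$ on each $A^{0,q}(\End(E))$ together with the Dolbeault Laplacian $\Box = D_E D_E^* + D_E^* D_E$. Since $\Box$ is a self-adjoint elliptic operator, Hodge theory on the compact manifold $X$ yields the orthogonal decomposition
\[
A^{0,q}(\End(E)) = \mathbb{H}^q \oplus \operatorname{Im} D_E \oplus \operatorname{Im} D_E^*,
\]
with $\mathbb{H}^q \cong H^q(X,\End(E))$ finite-dimensional, a harmonic projection $\Pi$ onto $\mathbb{H}^q$, and a Green operator $G$ inverting $\Box$ on $(\mathbb{H}^q)^\perp$. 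These operators extend to the Sobolev completions and commute with $D_E$ and $D_E^*$.

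Next I would build the Kuranishi map. For each harmonic class $a \in \mathbb{H}^1$ small, I solve the fixed-point equation
\[
\alpha(a) = a - D_E^* G\,(\alpha(a)\wedge\alpha(a))
\]
in $A^{0,1}(\End(E))_k$. Because the quadratic term has vanishing differential at the origin, the implicit function theorem (equivalently, the contraction principle) in Banach spaces furnishes a unique small solution $\alpha(a)$ depending holomorphically on $a$, and since $(D_E^*)^2=0$ one checks $D_E^*\alpha(a)=0$, so $\alpha(a)$ lies in the Coulomb slice. I would then define the Kuranishi space as the germ
\[
K = \{\, a \in \mathbb{H}^1 : \Pi(\alpha(a)\wedge\alpha(a)) = 0 \,\},
\]
the zero locus of the obstruction map $a \mapsto \Pi(\alpha(a)\wedge\alpha(a)) \in \mathbb{H}^2$; this is a finite-dimensional complex analytic germ, and the restriction of $a \mapsto \alpha(a)$ to $K$ is the candidate semi-universal family, the associated bundle being $\underline{E}$ equipped with $D_E + \alpha(a)$.

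The crux, and what I expect to be the main obstacle, is the integrability step: showing that for $a \in K$ the element $\alpha(a)$ solves the full nonlinear equation $\mathcal{P}_{D_E}(\alpha(a)) = 0$, not merely its harmonic projection. Setting $\beta = D_E\alpha + \alpha\wedge\alpha$, the defining fixed-point equation gives $D_E\alpha = -D_E D_E^* G(\alpha\wedge\alpha)$, and feeding this together with $\Pi(\alpha\wedge\alpha)=0$ into the Hodge decomposition of $\alpha\wedge\alpha$ reduces $\beta$ to $\beta = D_E^* G\,D_E(\alpha\wedge\alpha)$. The algebraic identity $D_E(\alpha\wedge\alpha) = \beta\wedge\alpha - \alpha\wedge\beta$ (the cubic terms cancelling) then turns this into the linear relation $\beta = D_E^* G\,(\beta\wedge\alpha - \alpha\wedge\beta)$. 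Since $D_E^* G$ is bounded and multiplication by the small form $\alpha$ has small operator norm, the operator $\beta \mapsto D_E^* G\,(\beta\wedge\alpha - \alpha\wedge\beta)$ is a contraction, forcing $\beta = 0$ and hence $\mathcal{P}_{D_E}(\alpha(a)) = 0$. Elliptic regularity, bootstrapping on the equation $\Box\alpha = $ (lower order terms), then upgrades the Sobolev solutions to $C^\infty$ ones and shows the family is of class $C^\infty$ on $X\times K$.

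Finally I would prove semi-universality and deduce uniqueness. Given any local deformation $(\rho,(S,0))$, I would use the $\GL(E)$-action of Proposition \ref{p2.1}(ii) --- realized by solving, via the implicit function theorem, for a gauge transformation $g$ putting $g.\rho$ into the Coulomb slice $D_E^*(\,\cdot\,)=0$ --- to produce a map $g\colon (S,0)\to K$ whose pullback $g^*$ of the Kuranishi family is equivalent to $(\rho,(S,0))$; uniqueness in the fixed-point construction identifies the gauged deformation with $\alpha(\Pi(g.\rho))$. The identification of $\mathbb{H}^1 = H^1(X,\End(E))$ with the space of first-order deformations, via the Kodaira--Spencer map, shows that the differential at $0$ of such a classifying map is forced, giving its uniqueness and hence semi-universality. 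Uniqueness of the semi-universal deformation up to non-canonical isomorphism is then formal: between two such, the mutually classifying maps have invertible differentials at the reference point, hence are isomorphisms of germs, the non-canonicity reflecting that only the differential at $0$, and not the classifying map itself, is determined.
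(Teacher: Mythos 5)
Your proposal is correct and takes essentially the same route as the paper's source for this theorem (\cite[Theorem 1]{12}, whose construction the paper reproduces in the proof of Theorem \ref{t4.1}): Hodge theory for $\End(E)$, the gauge slice $D_E^*\alpha=0$, a finite-dimensional Kuranishi space cut out by the obstruction, elliptic regularity, and Coulomb gauge-fixing for semi-universality. The only cosmetic difference is that you parametrize the slice by solving the fixed-point equation over $\mathcal{H}^{0,1}$ and then impose the vanishing of the harmonic projection of $\alpha\wedge\alpha$, whereas the paper defines $Q$ by the equations $D_E^*\alpha=0$, $D_E^*\circ\mathcal{P}_{D_E}(\alpha)=0$ and intersects with $\mathcal{P}_{D_E}^{-1}(0)$ --- equivalent descriptions of the same germ, as the paper's own remark identifying the chart of $Q$ with the restriction of the Kuranishi map points out.
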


Next, let us take a moment to recall the definition of group actions on complex spaces. For the sake of completeness, we recall first that a mapping $\alpha$ from a real analytic (resp. complex) manifold $W$ to a Fréchet space $F$ over $\mathbb{C}$ is called \textit{real analytic} (reps. \textit{holomorphic}) if for each point $w_0\in W$ there exists an open coordinate neighborhood $N_{w_0}$ and a real analytic (resp. holomorphic) coordinate system $t_1,\ldots,t_n $ in $N$ such that $t_i(w_0)=0$ and for all $w\in N$, we have
that $$\alpha(w)=\sum a_{i_1,\ldots,i_n}t_1^{i_1}(w)\ldots t_n^{i_n}(w) $$ where $a_{i_1,\ldots,i_n} \in F$ and the convergence is absolute with respect to any continuous semi-norm on $F$. Furthermore, by a $C^p$-map, we insinuate a $p$-times continuously differentiable function. Let $G$ be a real (resp. complex) Lie group and $X$ a complex space. A $G$-action on $X$ is given by a group homomorphism $\Phi:\; G \rightarrow \Au(X)$, where $\Au(X)$ is the group of biholomorphisms of $X$. 
\begin{defi} The $G$-action determined by $\Phi$ is said to be real analytic (resp. holomorphic) if for each open relatively compact $U \Subset X$ and for each open $V\subset X$, the following conditions are satisfied
\begin{enumerate}
\item[(i)]$W:=W_{\overline{U},V}:=\lbrace g\in G \mid g\cdot \overline{U}\subset V \rbrace $ is open in $G$,
\item[(ii)]the map \begin{align*}
*:W&\rightarrow \mathcal{O}(U)\\
g &\mapsto f\circ g\mid_U
\end{align*} is real analytic (resp. holomorphic) for all $f\in \mathcal{O}(V)$ ,
\end{enumerate}
where $\overline{U}$ is the closure of $U$ and $\mathcal{O}(P)$ is the set of holomorphic functions on $P$ for any open subset $P$ of $X$ ($\mathcal{O}(P)$ is equipped with the canonical Fréchet topology).
\end{defi}
Finally, it is time for us to introduce $G$-equivariant deformations, which are of central interest of the article. As before, let $X$ be a complex compact manifold over which a holomorphic vector bundle $E$ is defined. Let $G$ be a subgroup of the group of holomorphic automorphisms of $E$.
\begin{defi} \label{d3.4}
 A real analytic (resp.\ holomorphic) $G$-equivariant local deformation of $E$ is a usual local deformation of  $(\pi,(B,0))$ of $E$ whose associated bundle $\mathcal{E}_B$ can be equipped with a real analytic (resp.\ holomorphic) $G$-action  extending the given (resp.\  holomorphic) $G$-action on $E$ and a real analytic (resp. holomorphic) $G$-action on $B$ in a way that the projection $\mathcal{E}_{B} \rightarrow  X\times B$ is a $G$-equivariant map with respect to these actions. We call these extended actions a real analytic (resp.\  holomorphic) $G$-equivariant structure on $\mathcal{E}_{B} \rightarrow  X\times B$.
\end{defi}
\begin{rem} We make the following convention. Whenever we have a $G$-action on $(B,0)$,  the $G$-action on $X\times B$ in Definition \ref{d3.4} is defined by $$g(x,b)=(x,g.b)$$ for $g \in G$ and $(x,b)\in X\times B$. This is exactly the action with respect to which we want the projection $\mathcal{E}_{B} \rightarrow  X\times B$ to be $G$-equivariant.  Moreover, the restriction of the $G$-action on $\mathcal{E}_B$ on the $\mathcal{E}_{B,0}$ is nothing but the initial $G$-action on $E$.
\end{rem}
\hfill \break
\begin{center}
\begin{tikzpicture}[x=0.75pt,y=0.75pt,yscale=-1,xscale=1]

\draw    (71,235.5) -- (269,235.5) -- (472,235.5) ;
\draw   (119,11) -- (421,11) -- (421,151) -- (119,151) -- cycle ;
\draw    (351,11) -- (351,151) ;
\draw    (270,11) -- (270,151) ;
\draw    (191,11) -- (191,151) ;
\draw  [line width=1]  (195,92) .. controls (196.98,125.66) and (304.32,113.26) .. (345.28,83.89) ;
\draw [shift={(346.5,83)}, rotate = 503.13] [color={rgb, 255:red, 0; green, 0; blue, 0 }  ][line width=1]    (10.93,-3.29) .. controls (6.95,-1.4) and (3.31,-0.3) .. (0,0) .. controls (3.31,0.3) and (6.95,1.4) .. (10.93,3.29)   ;
\draw  [line width=1]  (195,245) .. controls (193.01,271.89) and (240.52,293.78) .. (342.95,246.71) ;
\draw [shift={(344.5,246)}, rotate = 515.12] [color={rgb, 255:red, 0; green, 0; blue, 0 }  ][line width=1]    (10.93,-3.29) .. controls (6.95,-1.4) and (3.31,-0.3) .. (0,0) .. controls (3.31,0.3) and (6.95,1.4) .. (10.93,3.29)   ;
\draw[line width=1] [color={rgb, 255:red, 74; green, 144; blue, 226 }  ,draw opacity=1 ]   (268,127) .. controls (213.55,157.69) and (209.09,30.58) .. (264.31,53.27) ;
\draw [shift={(266,54)}, rotate = 204.52] [color={rgb, 255:red, 74; green, 144; blue, 226 }  ,draw opacity=1 ][line width=1]    (10.93,-3.29) .. controls (6.95,-1.4) and (3.31,-0.3) .. (0,0) .. controls (3.31,0.3) and (6.95,1.4) .. (10.93,3.29)   ;

\draw [line width=1][color={rgb, 255:red, 74; green, 144; blue, 226 }  ,draw opacity=1 ]   (267,232.5) .. controls (200.33,165.83) and (340.59,165.5) .. (276.97,231.5) ;
\draw [shift={(276,232.5)}, rotate = 314.57] [color={rgb, 255:red, 74; green, 144; blue, 226 }  ,draw opacity=1 ][line width=1]    (10.93,-3.29) .. controls (6.95,-1.4) and (3.31,-0.3) .. (0,0) .. controls (3.31,0.3) and (6.95,1.4) .. (10.93,3.29)   ;

\draw (172.75,42.15) node [anchor=north west][inner sep=0.75pt]    {$\mathcal{E}_{s}$};
\draw (220.75,18.15) node [anchor=north west][inner sep=0.75pt]    {$\mathcal{E}_{0} =E$};
\draw (355.75,41.15) node [anchor=north west][inner sep=0.75pt]    {$\mathcal{E}_{gs}$};
\draw (444.75,22.15) node [anchor=north west][inner sep=0.75pt]    {$\mathcal{E}$};
\draw (184.75,240.15) node [anchor=north west][inner sep=0.75pt]    {$s$};
\draw (432.75,210.15) node [anchor=north west][inner sep=0.75pt]    {$( S,0)$};
\draw (266.75,240.15) node [anchor=north west][inner sep=0.75pt]    {$0$};
\draw (273.25,95.15) node [anchor=north west][inner sep=0.75pt]    {$\cong $};
\draw (273.25,110.15) node [anchor=north west][inner sep=0.75pt]    {$g$};
\draw (186.75,232.15) node [anchor=north west][inner sep=0.75pt]    {$\bullet $};
\draw (346.75,232.15) node [anchor=north west][inner sep=0.75pt]    {$\bullet  $};
\draw (266.75,232.15) node [anchor=north west][inner sep=0.75pt]    {$\bullet  $};
\draw (347.25,240.15) node [anchor=north west][inner sep=0.75pt]    {$gs$};
\draw (228.75,73.15) node [anchor=north west][inner sep=0.75pt]    {$g$};
\draw (209.75,73.15) node [anchor=north west][inner sep=0.75pt]    {$\cong $};
\draw (255.75,275.15) node [anchor=north west][inner sep=0.75pt]    {$g$};

\draw (264.75,190.15) node [anchor=north west][inner sep=0.75pt]    {$g$};
\end{tikzpicture}
\end{center}

As a matter of course, the main goal of this paper is to construct a real analytic (resp.\  holomorphic) $G$-equivariant semi-universal local deformation of a complex vector bundle with a real analytic (resp.\  holomorphic) $G$-action. Intuitively, the expected extended $G$-action on the ``Kuranishi space"  permutes the nearby holomorphic structures and keeps the central one untouched.
\begin{rem}
For simplicity, by $G$-actions (resp.\  $G$-equivariant local deformations), we really mean real analytic $G$-actions (resp.\  real analytic $G$-equivariant local deformations).
\end{rem}

\section{Existence of group operations on local moduli spaces}\label{s4}
In this section, we shall follow strictly the construction given in \cite{12}, in which the $G$-action can be naturally integrated along the lines. As usual, let $X$ be a compact complex manifold over which a holomorphic vector bundle $E$ is defined and $G$ a subgroup of the group of holomorphic automorphisms of $E$. The case that $G$ is a compact Lie group shall be treated first. It should be noted that $G$ will induce a holomorphic $G$-action on the bundle $\End(E)$ given by 
\begin{equation}\label{e4.1} g.\sigma =g^{-1}\circ \sigma \circ g 
\end{equation}  for $g \in G$ and $\sigma \in \End(E)$. Consequently, we obtain natural $G$-action on $A^{0,1}(\End(E))$, $A^{0,2}(\End(E))$ and then on $H^{1}(X,\End(E))$. The compactness of $G$ permits us to impose a $G$-invariant Hermitian metric on $\End(E)$, by the unitary trick. This metric induces a $G$-invariant metric on $A^{0,1}(\End(E))$ with respect to which a formal adjoint  $$D_{E}^*: A^{p,q}(\End(E)) \rightarrow A^{p,q-1}(\End(E)) $$ of $$D_{E}:A^{p,q-1}(\End(E)) \rightarrow A^{p,q}(\End(E))$$ is provided. Since the $G$-action is holomorphic then the connection $D_E$ is a $G$-equivariant differential operator. The equivariance of $D_{E}^*$ follows from the one of $D_E$ and from the $G$-invariance of the imposed metric. As a matter of fact, the Laplace-Beltrami operator associated to $\End(E)$, $$\square_{E}:= D_{E}^* D_{E}+D_{E}D_{E}^*$$ is $G$-equivariant as well. Moreover, the principal part of $\square_{E}$ coincides with that of the usual Laplace-Beltrami operator. The latter is well-known to be a strongly elliptic self-adjoint operator of second order. Hence, so is $\square_{E}$. This is where we can make use of the Hodge theory for the bundle $\End(E)$. Namely, if we denote the space of harmonic $(0,1)$-form with coefficients in $\End(E)$ by $\mathcal{H}^{0,1}$ then $\mathcal{H}^{0,1}$ can be naturally identified with the first cohomology $H^1(X,\End(E))$ and the following orthogonal decomposition is available
\begin{align*}
 A^{0,1}(\End(E)) &=\mathcal{H}^{0,1}\oplus \square_{E} A^{0,1}(\End(E)) \\ 
 &= \mathcal{H}^{0,1}\oplus D_E A^{0,0}(\End(E))\oplus D_E^* A^{0,2}(\End(E))
\end{align*} together with two linear operators:
\begin{enumerate}
\item[(i)] The Green operator: $\mathcal{G}:\; A^{0,1}(\End(E)) \rightarrow \square_{E} A^{0,1}(\End(E))$,
\item[(ii)] The harmonic projection: $\mathrm{P}_{\mathcal{H}^{0,1}}:\; A^{0,1}(\End(E)) \rightarrow \mathcal{H}^{0,1}$
\end{enumerate} such that \begin{equation} \label{e4.2} \mathrm{Id}_{A^{0,1}(\End(E))}= \mathrm{P}_{\mathcal{H}^{0,1}}+\square_{E}\mathcal{G}.
\end{equation}
\begin{lem} The operators $\mathrm{P}_{\mathcal{H}^{0,1}}$ and $\mathcal{G}$ are $G$-equivariant.
\end{lem}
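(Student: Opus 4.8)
The plan is to reduce both equivariance statements to two facts already established in the preceding discussion: that $\square_E$ is a $G$-equivariant operator, and that the $G$-action on the Hilbert space $A^{0,1}(\End(E))$ is by isometries (a consequence of the $G$-invariance of the metric provided by the unitary trick). Write $\lambda_g$ for the bounded linear operator on $A^{0,1}(\End(E))$ given by the action of $g\in G$. I would treat the harmonic projection first and then deduce the statement for the Green operator from the fundamental identity \eqref{e4.2}.

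For $\mathrm{P}_{\mathcal{H}^{0,1}}$, the first step is to observe that $\mathcal{H}^{0,1}=\ker\square_E$ is $G$-stable: if $\square_E\alpha=0$ then $\square_E\lambda_g\alpha=\lambda_g\square_E\alpha=0$ by the equivariance of $\square_E$, so $\lambda_g\alpha\in\mathcal{H}^{0,1}$. Since each $\lambda_g$ is an isometry, it also preserves the orthogonal complement $(\mathcal{H}^{0,1})^\perp=\square_E A^{0,1}(\End(E))$; hence $\lambda_g$ respects the orthogonal decomposition and commutes with the orthogonal projection onto either summand. In particular $\lambda_g\,\mathrm{P}_{\mathcal{H}^{0,1}}=\mathrm{P}_{\mathcal{H}^{0,1}}\lambda_g$, which is the desired equivariance.

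For $\mathcal{G}$, I would compose the identity $\mathrm{Id}=\mathrm{P}_{\mathcal{H}^{0,1}}+\square_E\mathcal{G}$ with $\lambda_g$ on both sides and compare. Composing on the left and using the equivariance of $\mathrm{P}_{\mathcal{H}^{0,1}}$ and of $\square_E$ gives $\lambda_g=\mathrm{P}_{\mathcal{H}^{0,1}}\lambda_g+\square_E\lambda_g\mathcal{G}$, while composing on the right gives $\lambda_g=\mathrm{P}_{\mathcal{H}^{0,1}}\lambda_g+\square_E\mathcal{G}\lambda_g$. Subtracting yields $\square_E(\lambda_g\mathcal{G}-\mathcal{G}\lambda_g)=0$, so the image of $\lambda_g\mathcal{G}-\mathcal{G}\lambda_g$ lies in $\mathcal{H}^{0,1}$. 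On the other hand, since $\operatorname{im}\mathcal{G}\subset\square_E A^{0,1}(\End(E))=(\mathcal{H}^{0,1})^\perp$ and $\lambda_g$ preserves this complement, both $\lambda_g\mathcal{G}$ and $\mathcal{G}\lambda_g$ take values in $(\mathcal{H}^{0,1})^\perp$, whence so does their difference. Therefore $\lambda_g\mathcal{G}-\mathcal{G}\lambda_g$ lands in $\mathcal{H}^{0,1}\cap(\mathcal{H}^{0,1})^\perp=\{0\}$, which forces $\lambda_g\mathcal{G}=\mathcal{G}\lambda_g$.

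The only genuine input is the isometric nature of the $G$-action, which is exactly what makes the orthogonal decomposition $G$-compatible; everything else is formal manipulation of \eqref{e4.2}. I expect the main (mild) obstacle to be the argument for $\mathcal{G}$, where one must use \emph{both} characterizing properties of the Green operator — that $\square_E\mathcal{G}$ inverts $\square_E$ off the harmonics and that $\operatorname{im}\mathcal{G}\perp\mathcal{H}^{0,1}$ — since a single one of them only pins $\mathcal{G}$ down up to the harmonic part. Combining the two, as above, removes the ambiguity and gives exact equivariance.
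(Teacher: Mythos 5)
Your proof is correct. The paper actually states this lemma without any proof, leaving it as a consequence of the facts established just before it (the $G$-equivariance of $\square_E$ and the $G$-invariance of the $L^2$ metric); your argument --- $G$-stability of $\ker\square_E$, preservation of the orthogonal complement by the isometric action, and the formal manipulation of the identity $\mathrm{Id}=\mathrm{P}_{\mathcal{H}^{0,1}}+\square_E\mathcal{G}$ to pin down $\mathcal{G}$ --- is precisely the standard derivation the author intends, and it fills the gap completely.
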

Consider the map 
\begin{align*}
\mathcal{P}_{D_E} :A^{0,1}(\End(E))_k &\rightarrow A^{0,2}(\End(E))_{k-1}\\
\alpha &\mapsto D_E.\alpha +\alpha \wedge \alpha
\end{align*} defined the previous section.

\begin{lem} $\mathcal{P}_{D_E}$ is $G$-equivariant.

\end{lem}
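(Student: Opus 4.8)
The goal is to show that the map
$$\mathcal{P}_{D_E}(\alpha)=D_E.\alpha+\alpha\wedge\alpha$$
is $G$-equivariant, where $G$ acts on $A^{0,1}(\End(E))$ and $A^{0,2}(\End(E))$ through the action \eqref{e4.1}, namely $g.\sigma=g^{-1}\circ\sigma\circ g$ on $\End(E)$, extended to forms by acting only on the endomorphism part. The plan is simply to verify that $\mathcal{P}_{D_E}(g.\alpha)=g.\mathcal{P}_{D_E}(\alpha)$ term by term, treating the linear piece $D_E.\alpha$ and the quadratic piece $\alpha\wedge\alpha$ separately.

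First I would dispose of the quadratic term. Since $g\in G$ is a \emph{holomorphic} automorphism of $E$, conjugation by $g$ acts purely on the $\End(E)$-factor and commutes with the wedge product in the form part; combined with the fact that conjugation is an algebra homomorphism of $\End(E)$, one gets
$$(g.\alpha)\wedge(g.\alpha)=(g^{-1}\circ\alpha\circ g)\wedge(g^{-1}\circ\alpha\circ g)=g^{-1}\circ(\alpha\wedge\alpha)\circ g=g.(\alpha\wedge\alpha),$$
the middle cancellation $g\circ g^{-1}=\mathrm{Id}$ being exactly what makes conjugation multiplicative. This step is routine.

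The substantive step is the equivariance of the linear term, i.e. that $D_E(g.\alpha)=g.(D_E\alpha)$, in other words that $D_E$ commutes with conjugation by $g$. This is precisely the assertion, already invoked in the text, that because the $G$-action is holomorphic, $D_E$ is a $G$-equivariant differential operator: for a holomorphic automorphism $g$ one has $g.D_E=g^{-1}\circ D_E\circ g=D_E$ (this is the Remark following Proposition~\ref{p2.1}, which says $g.D=D$ for holomorphic automorphisms), and from $D_E\circ g=g\circ D_E$ on sections one deduces, via the Leibnitz rule and the commutation of $g$ with $\overline{\partial}$ on the scalar part, that $D_E(g^{-1}\circ\alpha\circ g)=g^{-1}\circ(D_E\alpha)\circ g$ on $\End(E)$-valued forms. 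I would spell this out by applying both sides to an arbitrary local section and expanding with the Leibnitz rule, checking that the lower-order correction terms coming from differentiating $g$ cancel thanks to holomorphicity.

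The main obstacle, such as it is, lies entirely in being careful about how $g$ interacts with $D_E$ on the \emph{endomorphism} bundle rather than on $E$ itself: one must confirm that the induced semi-connection on $\End(E)$ is compatible with conjugation by $g$, which is where holomorphicity of the $G$-action is genuinely used. Once $D_E$-equivariance and the multiplicativity of conjugation are in hand, adding the two terms gives
$$\mathcal{P}_{D_E}(g.\alpha)=D_E(g.\alpha)+(g.\alpha)\wedge(g.\alpha)=g.(D_E\alpha)+g.(\alpha\wedge\alpha)=g.\mathcal{P}_{D_E}(\alpha),$$
which is the desired equivariance. I expect no convergence or analytic subtleties here, since everything is algebraic once the underlying equivariance of $D_E$ is granted.
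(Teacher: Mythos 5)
Your proposal is correct and follows essentially the same route as the paper: the paper's proof likewise reduces the lemma to the identity $g.(\alpha\wedge\alpha)=g.\alpha\wedge g.\alpha$, which holds because $G$ acts trivially on the form part and by conjugation (an algebra homomorphism) on the endomorphism part. The equivariance of the linear term $D_E\alpha$, which you re-derive carefully from holomorphicity of the $G$-action, is exactly the fact the paper established just before the lemma (``the connection $D_E$ is a $G$-equivariant differential operator''), so your extra detail there is sound but not a different method.
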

\begin{proof} Indeed, it suffices to prove that for $g\in G$ and $\alpha \in A^{0,1}(\End(E))$, we have
$$g.(\alpha\wedge \alpha)=g.\alpha \wedge g.\alpha.$$
However, this follows immediately from the fact that $G$ acts trivially on the form part and acts on the endomorphism part by the rule (\ref{e4.1}).
\end{proof}
Now, we are ready to state our main result.
\begin{thm} \label{t4.1} Let $X$ be a compact complex manifold over which a holomorphic vector bundle $E$ is defined. Let $G$ be a compact real Lie subgroup of the automorphism group of $E$. Then there exists a real analytic $G$-equivariant semi-universal local deformation of $E$. 
\end{thm}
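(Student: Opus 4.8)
The plan is to run Kuranishi's construction of the semi-universal deformation of $E$ equivariantly, exploiting the fact that every ingredient appearing in that construction has already been shown to be $G$-equivariant. Since $G$ is compact, the unitary trick furnishes a $G$-invariant Hermitian metric on $\End(E)$, and the resulting Laplace--Beltrami operator $\square_E$, the Green operator $\mathcal{G}$, the harmonic projection $\mathrm{P}_{\mathcal{H}^{0,1}}$, and the map $\mathcal{P}_{D_E}$ are all $G$-equivariant by the preceding lemmas. The idea is that $G$ will then act linearly on each of the Banach spaces $A^{0,1}(\End(E))_k$ and $A^{0,2}(\End(E))_{k-1}$, and that an equivariant solution of Kuranishi's functional equation will automatically produce a $G$-invariant Kuranishi space together with a $G$-equivariant family.

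First I would set up Kuranishi's fixed-point equation. Writing $H := \mathcal{H}^{0,1} \cong H^1(X,\End(E))$, one seeks, for $\eta$ in a small neighborhood of $0$ in $H$, a solution $\alpha(\eta) \in A^{0,1}(\End(E))_k$ of
\begin{equation*}
\alpha = \eta - D_E^* \mathcal{G}(\alpha \wedge \alpha),
\end{equation*}
which is solved by the implicit function theorem in Banach spaces for $\eta$ small (this is where the Sobolev completions and the choice of large $k$ are needed, so that $\alpha \wedge \alpha$ lands in $A^{0,2}(\End(E))_{k-1}$ and elliptic regularity makes the solution smooth). The map $\eta \mapsto \alpha(\eta)$ is holomorphic, hence in particular real analytic. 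The Kuranishi space is then cut out inside $H$ by the finitely many equations $\mathrm{P}_{\mathcal{H}^{0,1}}(\alpha(\eta) \wedge \alpha(\eta)) = 0$, and the associated family is $D_E + \alpha(\eta)$.

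The crucial step is to verify that $G$ acts on this data compatibly, and here the key point is uniqueness. Because $D_E^*$, $\mathcal{G}$, and the wedge product are all $G$-equivariant, for each fixed $g \in G$ the element $g \cdot \alpha(g^{-1} \cdot \eta)$ solves the \emph{same} fixed-point equation as $\alpha(\eta)$ (one checks the equivariant transformation of each term of the defining equation); by the uniqueness clause in the implicit function theorem we get $\alpha(g \cdot \eta) = g \cdot \alpha(\eta)$ for $\eta$ in a small $G$-invariant neighborhood, which I can arrange by averaging or shrinking. Since $G$ acts linearly on $H$ and leaves $\mathcal{H}^{0,1}$ invariant, the neighborhood and the cutting equations $\mathrm{P}_{\mathcal{H}^{0,1}}(\alpha(\eta)\wedge\alpha(\eta))=0$ are $G$-invariant, so $G$ genuinely acts on the Kuranishi space $(B,0)$, and the assignment $\eta \mapsto D_E + \alpha(\eta)$ intertwines this with the action on holomorphic structures given by \eqref{e4.1}, yielding the $G$-equivariant structure on $\mathcal{E}_B$ required by Definition \ref{d3.4}.

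The main obstacle I anticipate is bookkeeping the regularity and equivariance of the $G$-action at the level of \emph{maps} rather than pointwise: one must confirm that $G \to \Au(B)$ is a real analytic action in the sense of the definition above, i.e.\ that the induced map $* : W \to \mathcal{O}(U)$ is real analytic, which requires controlling the joint dependence on $g$ and $\eta$ and not merely the fixed-$g$ equivariance identity. This is delicate precisely because $G$ acts on the infinite-dimensional Banach spaces $A^{0,1}(\End(E))_k$ only continuously in general, so the passage from the continuous $G$-action to a real analytic action on the finite-dimensional slice $H$ and on $B$ needs care; I expect to handle it by using that $G$ acts real analytically and linearly on the finite-dimensional harmonic space $H$ (where smoothness is automatic) and that $\alpha(\eta)$ depends real analytically on $\eta$, thereby transferring real analyticity from $H$ to $B$ and to the family. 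The compactness of $G$ is used repeatedly: for the invariant metric, for keeping the neighborhood $G$-invariant, and for the decomposition of $H$ into $G$-representations.
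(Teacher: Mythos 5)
Up to the construction of the equivariant base, your argument is the paper's own proof in an equivalent presentation: the paper cuts out the slice $Q=\lbrace \alpha : \Vert\alpha\Vert_k<\epsilon,\ D_E^*\alpha=0,\ D_E^*\circ\mathcal{P}_{D_E}(\alpha)=0\rbrace$ and obtains the $G$-action on it from the $G$-equivariance of the chart map $\gamma=(\mathrm{P}_{\mathcal{H}^{0,1}},D_E^*,D_E^*\circ\mathcal{P}_{D_E})$ via the inverse mapping theorem, while you parametrize the same slice by the harmonic space through the fixed-point equation and deduce equivariance from uniqueness of its small solution; the remark immediately following the proof of Theorem \ref{t4.1} makes precisely this identification (the chart $\mathrm{P}_{\mathcal{H}^{0,1}}:Q\to\mathcal{H}^{0,1}$ is the Kuranishi map, whose local inverse is your $\eta\mapsto\alpha(\eta)$). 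One slip in this part: since $\alpha(\eta)\wedge\alpha(\eta)$ is a $(0,2)$-form, your obstruction equations must use the harmonic projection onto $\mathcal{H}^{0,2}\cong H^2(X,\End(E))$, not $\mathrm{P}_{\mathcal{H}^{0,1}}$.

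The genuine gap is in your final step, where you assert that the intertwining $\alpha(g\cdot\eta)=g\cdot\alpha(\eta)$ ``yields the $G$-equivariant structure on $\mathcal{E}_B$ required by Definition \ref{d3.4}.'' That definition demands a $G$-action on the associated bundle $\mathcal{E}_B$ itself, i.e.\ a homomorphism $G\to\Au(\mathcal{E}_B)$: each $g$ must act as a biholomorphism of the total space, not merely as a collection of fiberwise isomorphisms. The identity $\alpha(g\eta)=g\cdot\alpha(\eta)$ only says that $g$, regarded as a diffeomorphism of $\underline{E}$, carries the holomorphic structure $D_E+\alpha(\eta)$ to $D_E+\alpha(g\eta)$, i.e.\ it is holomorphic fiber by fiber. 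One still has to prove that the map $(e,t)\mapsto(ge,gt)$ of $\underline{E}\times T$ is holomorphic for the complex structure that the family induces on the total space $\mathcal{E}$. This is the entire second half of the paper's proof: equivariance of the inclusion $\omega$ gives the relation \eqref{e4.5} between the fiberwise complex structures, the differential $dg_{(e,t)}=(dg_e,dg_t)$ is diagonal, the action on $T$ is by biholomorphisms, and the $\mathbb{C}$-linearity of $dg_e$ is then deduced from \eqref{e4.5}, the holomorphy of $g$ on the central fiber, and \cite[Lemma 3.1]{3}. Without this verification (or a substitute for it), you have produced an equivariant family of semi-connections together with an action on the parameter germ, but not yet a $G$-equivariant local deformation in the sense of the theorem. (Semi-universality, which you also leave implicit, is handled in the paper by invoking the remainder of the proof of \cite[Theorem 1]{12}; since your construction coincides with the standard one, the same citation covers it.)
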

\begin{proof} We consider the following set
$$Q:= \lbrace \alpha \in A^{0,1}(\End(E))_k\vert \left \| \alpha \right \|_k < \epsilon,\; D_E^*\alpha=0,\; D_E^*\circ \mathcal{P}_{D_E}(\alpha)=0 \rbrace  .$$ Each $\alpha \in Q$ is a solution of the elliptic partial differential equation
\begin{equation}\square_{E}\alpha + D_E^*(\alpha\wedge \alpha)=0.
\end{equation} \label{e4.3}Thus, $Q$ is actually a subset of $A^{0,1}(\End(E))$. We claim further that $Q$ is a direct finite-dimensional sub-manifold of an open neighborhood of $0$ in $ A^{0,1}(\End(E))_k$. Indeed, let us  take into account the following auxiliary analytic function
\begin{align*}
\gamma :A^{0,1}(\End(E))_k &\rightarrow \mathcal{H}^{0,1}\oplus D_E^*  A^{0,1}(\End(E))_{k} \oplus D_E^*  A^{0,2}(\End(E))_{k-1}\\
\alpha &\mapsto  \left( \mathrm{P}_{\mathcal{H}^{0,1}}(\alpha),D_E^*\alpha,D_E^*\circ \mathcal{P}_{D_E}(\alpha) \right)
\end{align*} The differential of $\gamma$ at $0$ is $$d\gamma_0 =(\mathrm{P}_{\mathcal{H}^{0,1}},D_E^*,D_E^*D_E)$$ whose inverse can be explicitly given by

$$(d\gamma_0)^{-1}(\alpha_0,\alpha_1,\alpha_2)=\alpha_0+\mathcal{G}D_E(\alpha_2)+\mathcal{G}(\alpha_2)$$ so that $\gamma$ is a local analytic isomorphism around $0$ due to the inverse mapping theorem for Banach manifolds. Therefore, locally, $$Q=\gamma^{-1}(\mathcal{H}^{0,1}\times 0 \times 0)$$ where $\mathcal{H}^{0,1}$ is known to be a finite dimensional vector space over $\mathbb{C}$. This justifies the claim for $\epsilon$ small enough. Now, on one hand, note that because each component of $\gamma$ is $G$-equivariant then so is $\gamma$. On the other hand, $\mathcal{H}^{0,1}\times 0 \times 0$ is $G$-invariant. Thus, after shrinking $Q$ (if necessary), we obtain a $G$-action on $Q$. For the sake of $G$-equivariance of $\mathcal{P}_{D_E}$, the germ of Banach analytic spaces $(\mathcal{P}_{D_E}^{-1}(0), 0)$ carries as well a $G$-action. Let $T =Q \cap \mathcal{P}_{D_E}^{-1}(0)$. Then the germ of complex space $(T,0)$ and the inclusion 
\begin{equation}\label{e4.4}\omega:\; (T,0) \rightarrow (\mathcal{P}_{D_E}^{-1}(0),0)
\end{equation} will determine a semi-universal local deformation $\mathcal{E} \rightarrow X\times (T,0)$ of $E$. This can be carried out in a similar way as in the rest of the proof of \cite[Theorem 1]{12}. What is new here is the fact that the analytic map $\omega$ is further $G$-equivariant.

Now, we would like to equip $\mathcal{E}$ with a compatible $G$-action so that the local deformation $\mathcal{E} \rightarrow X\times (T,0)$ becomes $G$-equivariant in the sense of Definition \ref{d3.4}. First, as a complex space, the bundle $\mathcal{E}$ is nothing but $\underline{E} \times T$ equipped with the complex structure induced by the holomorphic semi-connection $\omega$. In addition, each fiber $\mathcal{E}_{s}$ is exactly $\underline{E}$ equipped with a structure $\omega(s)$ of holomorphic vector bundles and in particular a structure $J_s$ of complex manifolds on the differentiable manifold $\underline{E}$. Thus, the $G$-equivariance of $\omega$ implies that \begin{equation} \label{e4.5} dg.J_s =J_{gs}.dg \end{equation} where $g\in G$ and $dg$ is the differential of $g$. At this point, it should be noted that $g$ is a biholomorphism with respect to the complex structure $J_0$ of the initial holomorphic bundle $E$ and that elsewhere we think of $g$ just as a diffeomorphism of $\underline{E}$. These discussions allow us to define the following $G$-action on $\underline{E} \times T$,
\begin{align*}
g :\underline{E} \times T &\rightarrow \underline{E} \times T\\
(e,t) &\mapsto (ge,gt)
\end{align*}
for each $g \in G$. We claim that the diffeomorphism $g$ defined in this way is actually a biholomorphism of $\mathcal{E}$. It is the same as verifying that the differential of $g$ at the point $(e,t)$ $$dg_{(e,t)}: \;\mathcal{T}_{(e,t)}^{\text{Zar}}\mathcal{E}=\mathcal{T}_e\underline{E}\oplus \mathcal{T}_t^{\text{Zar}} T \rightarrow  \mathcal{T}_{g.(e,t)}^{\text{Zar}}\mathcal{E}=\mathcal{T}_{g.e}\underline{E}\oplus \mathcal{T}_{gt}^{\text{Zar}}T $$ is $\mathbb{C}$-linear, where for each complex space $S$ and for each point $s \in S$, $\mathcal{T}_s^{\text{Zar}}S$ denotes the Zariski tangent space of $S$ at $s$. On one hand, $dg_{(e,t)}=(dg_e,dg_t)$ is diagonal. On the other hand, the $G$ acts on $T$ by biholomorphisms. Therefore, it is reduced to checking that $$dg_e:\;(\mathcal{T}_e\underline{E},J_{e,t}) \rightarrow (\mathcal{T}_{ge}\underline{E},J_{ge,gt})$$ is $\mathbb{C}$-linear. However, this follows immediately from (\ref{e4.5}), the fact that $g$ is biholomorphic on the central fiber and \cite[Lemma 3.1]{3}. In brief, we have just defined a compatible $G$-action on $\mathcal{E}$ by biholomorphisms, satisfying all the conditions given in Definition \ref{d3.4}.
\end{proof}
\begin{rem} A local chart of $Q$ is given by the harmonic projection $$\mathrm{P}_{\mathcal{H}^{0,1}}:\;  Q \rightarrow \mathcal{H}^{0,1}$$ whose target can be identified with $\mathbb{C}^{\dim_{\mathbb{C}} \mathcal{H}^{0,1}}$. This map turns out to be the restriction of the usual Kuranishi map on $Q$
\begin{align*}
\mathcal{K} :Q \subset A^{0,1}(\End(E))_k  &\rightarrow A^{0,1}(\End(E))_k\\
\alpha &\mapsto \alpha -\frac{1}{2}D_E^* \mathcal{G}[\alpha,\alpha],
\end{align*} by the definition of $\mathcal{P}_{D_E}$ and  by the decomposition (\ref{e4.2}).

\end{rem}
\begin{coro}\label{c4.1}Let $X$ be a compact complex manifold over which a holomorphic vector bundle $E$ is defined. Let $G$ be a complex reductive Lie subgroup of the automorphism group of $E$. Then there exists a holomorphic $G$-equivariant semi-universal local deformation of $E$ where the extended holomorphic $G$-actions are local. 

\end{coro}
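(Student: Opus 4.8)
The plan is to deduce the complex reductive case from the compact case already settled in Theorem~\ref{t4.1} by means of complexification. I would write $G = K^{\mathbb{C}}$, where $K$ is a maximal compact subgroup of $G$; this is legitimate because every (connected) complex reductive Lie group is the universal complexification of any of its maximal compact subgroups. Applying Theorem~\ref{t4.1} to $K$ produces a real analytic $K$-equivariant semi-universal local deformation, whose base is the Kuranishi space $(T,0)$ sitting inside $\mathcal{H}^{0,1}\cong H^{1}(X,\End(E))$ via the harmonic chart $\mathrm{P}_{\mathcal{H}^{0,1}}$ of the preceding remark. The crucial observation is that the $K$-action on $\mathcal{H}^{0,1}$ is \emph{linear}: it is induced by the conjugation action (\ref{e4.1}) on $\End(E)$, which is $\mathbb{C}$-linear and descends to a $\mathbb{C}$-linear action on cohomology. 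I would then promote every piece of this $K$-equivariant picture to a $G$-equivariant one, accepting that the resulting $G$-action will only be defined on a neighbourhood of the reference point, since $G$ is non-compact.

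First I would complexify the representation on the base. The compactness of $K$ lets us fix a $K$-invariant Hermitian inner product on the finite-dimensional space $\mathcal{H}^{0,1}$, so the $K$-action factors through a unitary group; by the universal property of $K^{\mathbb{C}}$ this representation extends uniquely to a holomorphic representation $G\to\GL(\mathcal{H}^{0,1})$. This already furnishes a global holomorphic linear $G$-action on the ambient space $\mathcal{H}^{0,1}$, and the content of the corollary is that it restricts to a local action on the germ $(T,0)$ and lifts compatibly to $\mathcal{E}$.

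The technical heart is an equivariance-propagation lemma: if $K$ acts holomorphically and linearly on finite-dimensional complex vector spaces $V$ and $W$, and $\Phi\colon (V,0)\to(W,0)$ is a $K$-equivariant holomorphic map germ, then $\Phi$ is automatically $K^{\mathbb{C}}$-equivariant as a germ. I would prove this by fixing $v$ near $0$ and comparing the two holomorphic maps $g\mapsto\Phi(g\cdot v)$ and $g\mapsto g\cdot\Phi(v)$, both of which are defined and holomorphic on an open neighbourhood of $K$ in $G$ and which agree on $K$. Since $K$ is a totally real submanifold of $G=K^{\mathbb{C}}$ of maximal dimension, a holomorphic function vanishing along $K$ vanishes in a neighbourhood; hence the two maps coincide near the identity, giving $\Phi(g\cdot v)=g\cdot\Phi(v)$ for $g$ close to $1$. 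Applying this to the $K$-equivariant obstruction equation cutting out $T$ inside $\mathcal{H}^{0,1}$---equivalently to the Kuranishi map $\mathcal{K}$ of the final remark---shows that $(T,0)$ is a $G$-invariant germ carrying a local holomorphic $G$-action; applying it again to the data defining the action on the total space lets me transport the construction of the $G$-action on $\mathcal{E}$ from the proof of Theorem~\ref{t4.1} essentially verbatim, with $g$ now ranging over a neighbourhood of $1$ in $G$.

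The main obstacle I anticipate is bookkeeping the \emph{domains of definition}. Because $G$ is non-compact there is neither a global action nor an invariant metric, so every averaging argument from the compact case must be replaced by the analytic-continuation mechanism above, and one must check that the neighbourhoods on which $g\cdot v$ stays inside the relevant charts can be chosen uniformly enough that the germ-level action satisfies the cocycle identity $g_{1}\cdot(g_{2}\cdot x)=(g_{1}g_{2})\cdot x$ wherever both sides are defined. Verifying that the lifted action on $\mathcal{E}$ is by biholomorphisms reduces, exactly as in (\ref{e4.5}) and \cite[Lemma 3.1]{3}, to the $\mathbb{C}$-linearity of the fibrewise differential; this survives complexification because the relation $dg\cdot J_{s}=J_{gs}\cdot dg$ is itself holomorphic in $g$ and holds on $K$, hence on a neighbourhood of $1$ by the same totally real argument. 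Semi-universality, finally, is a deformation-theoretic property of the underlying family and is unaffected by enlarging the acting group.
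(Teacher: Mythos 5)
Your proposal is correct and follows the same overall skeleton as the paper's proof: pass to a maximal compact subgroup $K$ with $K^{\mathbb{C}}=G$, run Theorem~\ref{t4.1} for $K$, and then upgrade the $K$-equivariance to a local holomorphic $G$-equivariance. The genuine difference is in how that upgrade is justified. The paper treats it as a black box: it invokes \cite[Theorem 5.1]{3} to place local $G$-actions on $(T,0)$ and on the Banach analytic germ $(\mathcal{P}_{D_E}^{-1}(0),0)$ extending the $K$-actions and making $\omega$ equivariant, and then repeats the lifting argument of Theorem~\ref{t4.1} verbatim. You instead prove the extension step yourself: complexify the linear $K$-representation on $\mathcal{H}^{0,1}$ (legitimate via a $K$-invariant inner product and the universal property of $K^{\mathbb{C}}$), and propagate $K$-equivariance of the holomorphic data to $G$-equivariance near the identity by analytic continuation across $K$, viewed as a maximal-dimensional totally real submanifold of $G$. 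This is a clean, self-contained substitute for the citation, and it is essentially the mechanism underlying the cited result. What the paper's route buys is brevity, and the fact that the cited theorem is formulated so as to apply directly to the Banach analytic germs, so that equivariance of $\omega$ itself (hence relation (\ref{e4.5})) comes for free; in your route two details must be written out, neither of which is a gap. First, your propagation lemma is stated for finite-dimensional targets, but the lift to $\mathcal{E}$ needs it for Banach-space-valued maps such as $\omega$ into $A^{0,1}(\End(E))_k$ (the identity principle survives, e.g.\ by composing with continuous linear functionals), or else one argues pointwise through (\ref{e4.5}) as you sketch. Second, the uniformity issue you flag is real but standard: since for each fixed $v$ the function $g\mapsto \Phi(g\cdot v)-g\cdot\Phi(v)$ vanishes on a neighborhood of $K$, all mixed Taylor coefficients of $(g,v)\mapsto \Phi(g\cdot v)-g\cdot\Phi(v)$ vanish along $K\times U'$, so the vanishing holds on a product neighborhood of $(1,0)$, and the cocycle identity is then inherited from the genuine global linear action on the ambient space $\mathcal{H}^{0,1}$.
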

\begin{proof} Let $K$ be the connected real maximal compact subgroup of $G$ such that its complexification is $G$. We repeat the proof of Theorem \ref{t4.1} to obtain a $K$-equivariant analytic maps of germs of (Banach) analytic spaces $$\omega:\; (T,0) \rightarrow (\mathcal{P}_{D_E}^{-1}(0),0).$$ By \cite[Theorem 5.1]{3}, we can equip local $G$-actions on $(T,0)$ and on $(\mathcal{P}_{D_E}^{-1}(0),0)$ (extending the initial $K$-actions), with respect to which the map $\omega$ is $G$-equivariant. Finally, we can use the same argument as in the proof of Theorem \ref{t4.1} to construct a holomorphic $G$-equivariant semi-universal local deformation $\mathcal{E}$ of $E$ where the extended holomorphic $G$-actions are local.
\end{proof}
\begin{rem} The extended $G$-actions constructed in the proof of Theorem \ref{t4.1} are global while those in that of Corollary \ref{c4.1} are only local.
\end{rem}
Corollary \ref{c4.1} tells us that if the automorphism group $\Au(E)$ of a holomorphic vector bundle $E$ is reductive then $\Au(E)$ acts holomorphically on the base $(T,0)$ of its semi-universal local deformation. A somehow natural question arising along this story is to describe the local structure of the moduli space in terms of Kuranishi spaces. More precisely, if we think of $E$ as a point in the ``moduli space" $\mathcal{M}(\underline{E})$ of holomorphic complex bundle structures on $\underline{E}$, it would be interesting to know whether a neighborhood of $E$ in $\mathcal{M}(\underline{E})$ can be modeled on the quotient $T/\Au(E)$ in some sense. We refer the curious reader to the papers \cite{16}, \cite{17}  of F. Catanese and \cite{18} of L. Meerssemann for an analogue discussion where Teichm\"{u}ller space and Kuranishi space of complex structures on a given differentiable manifold are taken into account.

\section{The associated differential graded Lie algebra} \label{s5} Over a field of characteristic zero, a well-known theorem of Lurie in \cite{11} (obtained independently by J. Pridham in \cite{20}) claims that any reasonable moduli problem is controlled by a differential graded Lie algebra (dgLa). The philosophy hidden behind this theorem is often credited to many big names in the domain: P. Deligne and V. Drinfeld first and foremost,
then M. Kontsevich, J. Stasheff, M. Schlessinger, S. Barannikov, V. Schechtman, V. Hinich, M. Manetti. In this approach, given an object $X$ of which one wishes to study small variations (complex compact manifolds, algebraic schemes, vector bundles, isolated singularities, etc), the philosophy suggests that there exists a dgLa $\mathfrak{g}_X$ governing deformations of $X$ in the sense that the deformation functor which to each local Artin algebra $A$ associates the set of Maurer-Cartan solutions modulo the gauge action (defined by means of $\mathfrak{g}_X$), is isomorphic to the set of isomorphism classes of deformations of $X$ over $\Sp(A)$. One of the classic illustrations of this phenomenon is when $X$ is a complex compact manifold. In this case, the controlling dgLa is nothing but the Dolbeault complex with values in the holomorphic tangent bundle of $X$. This allows us to transform a purely geometric problem to a purely algebraic one in view that the associated dgLa gives almost all information about the initial local moduli problem: its $0^{\text{th}}$, $1^{\text{st}}$ and $2^{\text{nd}}$ cohomology groups are nothing but the space of infinitesimal automorphisms, that of first order infinitesimal deformations (or equivalently, the tangent space) and that of obstructions to the (formal) smoothness, respectively. For more historical details about this direction, the interested reader is referred to the exceptionally beautiful seminar paper \cite{19} of B. To\"{e}n.

Continuing this spirit, in this section, we first translate the deformation problem of vector bundle in $\S$\ref{s3} into the language of functors of artinian rings and then compute the dgLa associated to the local moduli problem of holomorphic vector bundle. Hence, a formal version of Corollary \ref{c4.1} follows immediately from the mechanism that we developed in \cite{5}. Let us first recall some standard conventions:
\begin{enumerate}
\item $\set$ is the category of sets.
\item $\grp$ is the category of groups.
\item $\art$ is the category of local artinian $\mathbb{C}$-algebras with residue field $\mathbb{C}$. For each $A \in \art$, we denote its associated germ of complex spaces and its maximal ideal by $\mathrm{Spec}(A)$ and $\mathfrak{m}_A$, respectively.
\end{enumerate} 
In the language of artinian rings, Definition \ref{d3.1} can be read as follows. 
\begin{defi} \begin{enumerate} 
\item[(i)] \label{d5.1i} A deformation of $E$ over $A \in  \mathrm{\art}$ is a section $\alpha \in A^{0,1}(\End(E)) \otimes \mathfrak{m}_A$ such that \begin{equation}D_E.\alpha +\alpha \wedge \alpha =0 
\end{equation}
in $ A^{0,2}(E)\otimes\mathfrak{m}_A$.
\item[(ii)]\label{d5.1ii} Two deformations $\alpha_1, \alpha_2$ of $E$ over $A$ are equivariant if there exists a section $$\rho \in A^{0,0}(\GL(E))\otimes \mathfrak{m}_A $$ inducing the identity section on $E$ such that $$\rho^{-1}\circ(D_E+\alpha_2)\circ \rho=D_E+\alpha_1.$$ 

\end{enumerate}
\end{defi}
Now, consider the Dolbeault complex with values in the endomorphism bundle $\End(E)$ of $E$ $$A^{0,0}(\End(E)) \overset{D_E}{\longrightarrow} A^{0,1}(\End(E)) \overset{D_E}{\longrightarrow} A^{0,2}(\End(E))\overset{D_E}{\longrightarrow}\cdots\overset{D_E}{\longrightarrow}A^{0,n}(\End(E))$$ which can be further equipped with a graded Lie structure by using the following Lie bracket
\begin{equation}\label{e5.2}[\phi d\bar{z}_I, \psi  d\bar{z}_J]=(\phi\circ\psi-(-1)^{\left | I \right |.\left | J \right |} \psi\circ \phi) d\bar{z}_I \wedge\bar{z}_J \end{equation} where $n:=\dim{X}$, $I,J\subset \lbrace 1,\ldots, n \rbrace$ and $z_1,\ldots, z_n$ are local holomorphic coordinates. We denote this dgLa by $\mathfrak{g}_{*}$. Observe that the relation (\ref{d5.1i}) becomes $$D_E\alpha+\frac{1}{2}[\alpha,\alpha]=0$$ which is in the form of a Maurer-Cartan equation. As a matter of fact, the functor of artinian rings corresponding to the local moduli problem of $E$ is given by
\begin{align*}
\defo_E :\art &\rightarrow \set \\
A &\mapsto \left \{\alpha \in A^{0,1}(\End(E))\otimes \mathfrak{m}_A  \mid D_E\alpha+\frac{1}{2}[\alpha,\alpha]=0  \right \}/\sim 
\end{align*} where the equivalence relation $\sim$ is given in Definition \ref{d5.1ii}(ii). 

Finally, for the completeness, we recall the classical deformation functor $\mathbf{\mathrm{MC}}_{\mathfrak{g}_*}$ associated to $\mathfrak{g}_*$, defined via the Maurer-Cartan equation. We have two functors:
\begin{itemize}
\item[(1)] The Gauge functor 
\begin{align*}
G_{\mathfrak{g}_*}:\; \art & \rightarrow \grp \\
A &\mapsto \mathrm{exp}(\mathfrak{g}_0\otimes \mathfrak{m}_A)
\end{align*}

\item[(2)] The Maurer-Cartan functor $MC_{\mathfrak{g}_*}:\; \art \rightarrow \set $ defined by
\begin{align*}
MC_{\mathfrak{g}_*}:\; \art & \rightarrow \grp \\
A & \mapsto \left \{ x \in \mathfrak{g}_1\otimes \mathfrak{m}_A\mid D_E x+\frac{1}{2}[x,x]=0  \right \}.
\end{align*} 
\end{itemize}
For each $A$, the gauge action of $G_{\mathfrak{g}_*}(A)$ on the set $MC_{\mathfrak{g}_*}(A)$ is functorial in $A$ and gives an action of the group functor  $G_{\mathfrak{g}_*}$ on $MC_{\mathfrak{g}_*}$. This allows us to define the quotient functor \begin{align*}
\mathbf{\mathrm{MC}}_{\mathfrak{g}_*}:\; \art & \rightarrow \set \\
A & \mapsto MC_{\mathfrak{g}_*}(A)/G_{\mathfrak{g}_*}(A),
\end{align*} 
\begin{thm} As a consequence, there is an isomorphism
$$\mathbf{\mathrm{MC}}_{\mathfrak{g}_*}\cong \defo_E$$ as functors of artinian rings. As a sequence, the differential graded Lie algebra controlling the deformations of $E$ is $$A^{0,0}(\End(E)) \overset{D_E}{\longrightarrow} A^{0,1}(\End(E)) \overset{D_E}{\longrightarrow} A^{0,2}(\End(E))\overset{D_E}{\longrightarrow}\cdots\overset{D_E}{\longrightarrow}A^{0,n}(\End(E))$$ where the differential is given by the connection $D_E$ and the Lie bracket given by the rule (\ref{e5.2}).
\end{thm}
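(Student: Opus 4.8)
The plan is to recognize that both $\mathbf{\mathrm{MC}}_{\mathfrak{g}_*}$ and $\defo_E$ are quotient functors and to match them slot by slot for a fixed $A\in\art$: first identify the two ``numerator'' sets of Maurer--Cartan solutions, then identify the two equivalence relations imposed on them, and finally check that everything is natural in $A$. Since $\mathfrak{g}_1=A^{0,1}(\End(E))$ and $\mathfrak{m}_A$ is nilpotent, the set $MC_{\mathfrak{g}_*}(A)$ and the numerator of $\defo_E(A)$ are both cut out of $A^{0,1}(\End(E))\otimes\mathfrak{m}_A$, by the equations $D_E\alpha+\tfrac12[\alpha,\alpha]=0$ and $D_E\alpha+\alpha\wedge\alpha=0$ respectively, so the first task is to see that these two equations agree. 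Writing $\alpha=\sum_i\phi_i\,d\bar z_i$ locally and computing $[\alpha,\alpha]$ from the bracket (\ref{e5.2}), the skew-symmetry of $d\bar z_i\wedge d\bar z_j$ together with the sign $(-1)^{|I||J|}=-1$ in degree one collapses the double sum to $2\sum_{i<j}(\phi_i\circ\phi_j-\phi_j\circ\phi_i)\,d\bar z_i\wedge d\bar z_j$, which is exactly $2(\alpha\wedge\alpha)$ by the definition of the wedge product in Proposition \ref{p2.1}. Hence $\tfrac12[\alpha,\alpha]=\alpha\wedge\alpha$ and the two numerators are literally the same subset.

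The heart of the argument is the identification of the equivalence relations. I would first note that $a\mapsto\exp(a)$ identifies the gauge group $G_{\mathfrak{g}_*}(A)=\exp(\mathfrak{g}_0\otimes\mathfrak{m}_A)$ with the group of sections $\rho\in A^{0,0}(\GL(E))\otimes\mathfrak{m}_A$ reducing to $\Id_E$ modulo $\mathfrak{m}_A$, the inverse being the logarithm, which is a finite sum because $\mathfrak{m}_A$ is nilpotent. It then remains to show that, under this bijection, the gauge action agrees with the conjugation relation of Definition \ref{d5.1ii}. Expanding $\rho^{-1}\circ(D_E+\alpha)\circ\rho$ as an operator on sections and using that $D_E$ obeys the Leibniz rule, one obtains $\rho^{-1}(D_E+\alpha)\rho=D_E+\rho^{-1}(D_E\rho)+\rho^{-1}\alpha\rho$, so the transformed connection form is $\rho^{-1}(D_E\rho)+\rho^{-1}\alpha\rho$.

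With $\rho=\exp(a)$ the two standard identities $e^{-a}\alpha e^{a}=e^{-\mathrm{ad}_a}\alpha$ and $e^{-a}D_E(e^{a})=\frac{1-e^{-\mathrm{ad}_a}}{\mathrm{ad}_a}(D_E a)$ turn this into $e^{-\mathrm{ad}_a}\alpha+\frac{1-e^{-\mathrm{ad}_a}}{\mathrm{ad}_a}(D_E a)$, which is precisely the closed form of the gauge action of $e^{-a}=\rho^{-1}$ on $\alpha$; since $a\mapsto -a$ is a bijection of $\mathfrak{g}_0\otimes\mathfrak{m}_A$ this changes nothing at the level of orbits. I expect the main obstacle to be exactly the verification of these two exponential identities in the present graded setting: both follow from $D_E$ being a derivation of the associative product and from $\mathrm{ad}$ being a Lie algebra homomorphism, and the second is most cleanly obtained by differentiating $t\mapsto e^{-ta}D_E(e^{ta})$, every series involved being finite by nilpotency of $\mathfrak{m}_A$ so that no convergence question arises.

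Combining the three steps gives a bijection $MC_{\mathfrak{g}_*}(A)/G_{\mathfrak{g}_*}(A)\cong\defo_E(A)$ for each $A$, and naturality in $A$ is immediate since every construction above commutes with the base-change maps induced by morphisms in $\art$; this is the desired isomorphism $\mathbf{\mathrm{MC}}_{\mathfrak{g}_*}\cong\defo_E$. Finally, because $\defo_E$ is, by Definition \ref{d5.1ii}, the functor of artinian rings attached to the local moduli problem of $E$, this isomorphism is exactly the statement that the Dolbeault complex $A^{0,\bullet}(\End(E))$ with differential $D_E$ and bracket (\ref{e5.2}) is a differential graded Lie algebra controlling the deformations of $E$, which is the second assertion of the theorem.
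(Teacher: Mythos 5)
Your proof follows essentially the same route as the paper's: the paper's (three-line) proof consists precisely of the observation that $\exp$ gives a local isomorphism $(A^{0,0}(\End(E)),0)\to(A^{0,0}(\GL(E)),\Id_E)$ and that, under it, conjugation of semi-connections corresponds to the gauge action; your second and third paragraphs carry this out in detail. That detail is done correctly: the expansion $\rho^{-1}\circ(D_E+\alpha)\circ\rho=D_E+\rho^{-1}(D_E\rho)+\rho^{-1}\alpha\rho$, the two exponential identities (with all series finite by nilpotency of $\mathfrak{m}_A$), and the remark that $a\mapsto -a$ is a bijection so that orbits are unaffected, are exactly what is needed to make the paper's one-line assertion precise; naturality in $A$ is indeed immediate.

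One step, however, is inconsistent as written: the local computation of $[\alpha,\alpha]$. If you apply the sign of (\ref{e5.2}) literally, then in degree one $[\phi_i\,d\bar z_i,\phi_j\,d\bar z_j]=(\phi_i\circ\phi_j+\phi_j\circ\phi_i)\,d\bar z_i\wedge d\bar z_j$: the endomorphism factor is now \emph{symmetric} in $(i,j)$ while $d\bar z_i\wedge d\bar z_j$ is antisymmetric, so the double sum collapses to $0$, not to $2\sum_{i<j}(\phi_i\circ\phi_j-\phi_j\circ\phi_i)\,d\bar z_i\wedge d\bar z_j$. The expression you obtained is what comes out of the plain commutator $[\phi\,d\bar z_I,\psi\,d\bar z_J]=(\phi\circ\psi-\psi\circ\phi)\,d\bar z_I\wedge d\bar z_J$, i.e.\ the graded commutator of the wedge-composition product, in which the Koszul sign $(-1)^{|I||J|}$ is cancelled by the sign produced when the form parts are commuted past each other, $d\bar z_J\wedge d\bar z_I=(-1)^{|I||J|}\,d\bar z_I\wedge d\bar z_J$. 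That commutator is certainly the bracket the paper intends: it is the one satisfying graded antisymmetry, the only one for which $\tfrac12[\alpha,\alpha]=\alpha\wedge\alpha$ (so that the theorem is true at all), and it agrees with the bracket you implicitly use in the gauge-action step (where one argument has degree $0$, so the two conventions coincide). In short, your conclusion is right and the discrepancy traces back to a sign typo in (\ref{e5.2}) itself, but your computation should either state the bracket in commutator form or keep the sign while writing the second term with the form parts in swapped order, rather than assert that the printed sign produces the commutator.
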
 
\begin{proof}
The local isomorphism 
$$ \mathrm{exp}:\;(A^{0,0}(\End(E)),0) \rightarrow (A^{0,0}(\GL(E)),\mathrm{Id}_E) $$ and the fact that $(A^{0,0}(\GL(E)),\mathrm{Id}_E)$ acts on $A^{0,1}(\End(E))$ by conjugations permit us to conclude that the equivalence relation $\sim$ is given in Definition \ref{d5.1ii}(ii) is the same as the one induced by the gauge action of $G_{\mathfrak{g}_*}(A)$. Therefore, the desired isomorphism follows immediately.
\end{proof}

\begin{coro} \label{c5.1} Let $X$ be a compact complex manifold over which a holomorphic vector bundle $E$ is defined. Let $G$ be a complex reductive Lie subgroup of the automorphism group of $E$. Then there exists a compatible formal $G$-action on the local moduli space of $E$. 
\end{coro}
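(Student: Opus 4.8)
The plan is to reduce Corollary \ref{c5.1} to the formal machinery developed in \cite{5} by combining the explicit identification of the controlling dgLa with the functorial reductive-group-action formalism. The starting point is the theorem just proved: the deformation functor $\defo_E$ of the holomorphic vector bundle $E$ is isomorphic to $\mathbf{\mathrm{MC}}_{\mathfrak{g}_*}$, where $\mathfrak{g}_* = A^{0,\bullet}(\End(E))$ is the Dolbeault dgLa with differential $D_E$ and bracket (\ref{e5.2}). The essential observation is that the $G$-action on $E$ induces, via the rule (\ref{e4.1}) $g.\sigma = g^{-1}\circ\sigma\circ g$, a holomorphic $G$-action on the endomorphism bundle $\End(E)$, hence a $G$-action on each term $A^{0,q}(\End(E))$ of the complex. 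First I would verify that this action is compatible with all the structure of $\mathfrak{g}_*$: that $G$ acts by dgLa automorphisms, i.e.\ that it commutes with the differential $D_E$ (the connection is $G$-equivariant, as already noted in \S\ref{s4}) and preserves the bracket (\ref{e5.2}). The bracket compatibility is exactly the computation already carried out in the proof that $\mathcal{P}_{D_E}$ is $G$-equivariant: $G$ acts trivially on the form part and by conjugation on the endomorphism part, so $[g.\phi, g.\psi] = g.[\phi,\psi]$.

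Once $\mathfrak{g}_*$ is seen to be a dgLa in the category of $G$-representations (each graded piece being a possibly infinite-dimensional but well-behaved representation, with $\End(E)$ carrying the reductive $G$-action), the next step is to invoke the formal equivariance result of \cite{5}. The mechanism there takes a dgLa equipped with an action of a reductive group $G$ and produces a compatible formal $G$-action on the associated formal moduli space, i.e.\ on the functor $\mathbf{\mathrm{MC}}_{\mathfrak{g}_*}$ and on its hull. Concretely, the Maurer-Cartan functor $MC_{\mathfrak{g}_*}$ and the gauge functor $G_{\mathfrak{g}_*}$ are both functorial in the dgLa, so a $G$-action by dgLa automorphisms induces $G$-actions on $MC_{\mathfrak{g}_*}$ and $G_{\mathfrak{g}_*}$ compatible with the gauge action, whence a $G$-action on the quotient $\mathbf{\mathrm{MC}}_{\mathfrak{g}_*}$. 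The reductivity of $G$ is what guarantees, via the averaging/representation-theoretic arguments of \cite{5}, that this action can be promoted from the functor level to a genuine formal action on a semi-universal formal deformation, i.e.\ on the complete local ring pro-representing (the hull of) $\defo_E$. I would then transport this action across the isomorphism $\mathbf{\mathrm{MC}}_{\mathfrak{g}_*}\cong \defo_E$ to obtain the desired compatible formal $G$-action on the local moduli space of $E$.

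The one point requiring genuine care is the passage through \cite{5}: I must check that the hypotheses of the formal equivariance theorem there are literally met by $\mathfrak{g}_*$, in particular that the relevant cohomology groups $H^i(\mathfrak{g}_*) = H^i(X,\End(E))$ are finite-dimensional $G$-representations (which holds by ellipticity and compactness of $X$) and that the $G$-action on the graded pieces is such that the reductive averaging used to split equivariantly is available. Since $G$ is reductive and the cohomology is finite-dimensional, complete reducibility gives $G$-equivariant splittings at the level of cohomology, and these are precisely the splittings \cite{5} uses to build the equivariant hull inductively along the Artinian tower. The main obstacle, then, is not a deep new difficulty but rather the bookkeeping of matching the present infinite-dimensional Fréchet/Dolbeault setup to the abstract dgLa input expected in \cite{5}; once the dgLa $\mathfrak{g}_*$ is recognized as an object in the category of $G$-equivariant dgLas with finite-dimensional cohomology, the conclusion is a direct application.

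\begin{proof}
By the theorem above, $\defo_E \cong \mathbf{\mathrm{MC}}_{\mathfrak{g}_*}$, where $\mathfrak{g}_* = A^{0,\bullet}(\End(E))$ is the Dolbeault dgLa with differential $D_E$ and bracket (\ref{e5.2}). The $G$-action on $E$ induces, via (\ref{e4.1}), a holomorphic $G$-action on $\End(E)$ and hence on each $A^{0,q}(\End(E))$. Since the connection $D_E$ is $G$-equivariant (as established in \S\ref{s4}) and since $G$ acts trivially on the form part while acting by conjugation on the endomorphism part, the bracket (\ref{e5.2}) is preserved; thus $G$ acts on $\mathfrak{g}_*$ by dgLa automorphisms. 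Consequently $\mathfrak{g}_*$ is a dgLa in the category of $G$-representations, with cohomology $H^i(\mathfrak{g}_*)\cong H^i(X,\End(E))$ finite-dimensional by ellipticity and the compactness of $X$. Since $G$ is reductive, these are completely reducible $G$-modules, so equivariant splittings are available at the level of cohomology. Applying the formal equivariance mechanism of \cite{5} to the $G$-equivariant dgLa $\mathfrak{g}_*$ yields a compatible formal $G$-action on a semi-universal formal deformation of $E$, that is, on the local moduli space of $E$. Transporting this action across the isomorphism $\mathbf{\mathrm{MC}}_{\mathfrak{g}_*}\cong \defo_E$ gives the claim.
\end{proof}
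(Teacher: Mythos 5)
Your overall strategy coincides with the paper's: transport the $G$-action to the Dolbeault dgLa $\mathfrak{g}_* = A^{0,\bullet}(\End(E))$, check that $G$ acts by dgLa automorphisms (commuting with $D_E$ and preserving the bracket), and then feed this $G$-equivariant dgLa into the machinery of \cite{5}. Those equivariance checks are correct and match what the paper establishes in \S\ref{s4}.

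However, there is a genuine gap in the step you dismiss as ``bookkeeping''. The result the paper actually invokes, \cite[Theorem 2.3]{5}, concerns semi-prorepresentable objects of \emph{derived} formal moduli problems in Lurie's sense; accordingly, the paper first upgrades $\mathbf{\mathrm{MC}}_{\mathfrak{g}_*}$ to a derived formal moduli problem $F_{\mathfrak{g}_*}$ via a simplicial Maurer--Cartan construction (citing \cite{8}, \cite{11}), a step absent from your argument. More importantly, the hypothesis that must be verified is not the one you propose (finite-dimensional cohomology plus complete reducibility of the $G$-modules $H^i(X,\End(E))$); it is that $\mathfrak{g}_*$ can be written as a homotopy colimit of ``simple'' dgLas $\mathfrak{g}(i)_*$, each finite-dimensional in every degree, cohomologically concentrated in $[0,+\infty)$, each carrying a $G$-action whose colimit recovers the given action, as in \cite[Lemma 3.1]{5}. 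Producing this finite-dimensional $G$-equivariant approximation of the infinite-dimensional Dolbeault complex is exactly where analytic input enters: the paper stresses that the $G$-equivariant Hodge decomposition $A^{0,n}(\End(E)) = \mathcal{H}^{0,n}\oplus \square_{E} A^{0,n}(\End(E))$, ``which is purely analytic, still plays a crucial role'' even for this formal statement. Complete reducibility of the finite-dimensional cohomology alone does not yield an equivariant finite-dimensional model of the dgLa itself, so the architecture of your proof is right, but the one step you wave through is the step that carries the real content.
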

\begin{proof}
The  functor $\mathbf{\mathrm{MC}}_{\mathfrak{g}_*}$ can be naturally upgraded to a derived formal moduli problem $F_{\mathfrak{g}_*}$  in Lurie's sense (cf. \cite{11}) via a simplicial version of the Maurer-Cartan equation (see \cite{8} for such a construction). Moreover, the associated dgLa of $F_{\mathfrak{g}_*}$  is nothing but $\mathfrak{g}_*$. Consequently, $F_{\mathfrak{g}_*}$ is a naturally extension of $\defo_E$ in the derived world. 

Now, note that the action of $G$ on $E$ induces a natural a $G$-action on $E$. By the same argument as in \cite[Lemma 3.1]{5}, we can write $\mathfrak{g}_*$ as a homotopic colimit of ``simple" dgLas, i.e.
$$\mathfrak{g}_*=\mathrm{colim}_i \; \mathfrak{g}(i)_*$$ where \begin{enumerate}
\item[(i)] each $\mathfrak{g}(i)_k$ is finite-dimensional,
\item[(ii)] $\mathfrak{g}(i)_*$ is cohomologically concentrated in $[0,+\infty)$,
\item[(iii)] each $ \mathfrak{g}(i)_*$ carries a $G$-action and the colimit of these $G$-actions gives back the initial $G$-action on $\mathfrak{g}_*$. 
\end{enumerate}  A remark is in order. Even in the formal aspect, to make the above $G$-approximation of the associated dgLa $\mathfrak{g}_*$ possible, the $G$-equivariant Hodge decomposition $$A^{0,n}(\End(E)) =\mathcal{H}^{0,n}\oplus \square_{E} A^{0,n}(\End(E)),$$ which is purely analytic still plays a crucial role.

By \cite[Theorem 2.3]{5}, the semi-prorepresentable object of $F_{\mathfrak{g}_*}$ carries a $G$-action. Hence, the restriction of $F_{\mathfrak{g}_*}$ on $\art$, (which is nothing but $\defo_E$) has a semi-universal element whose base is equipped with a compatible $G$-action. This finishes the proof. 
\end{proof}
\begin{rem} Corollary \ref{c5.1} reflects the fact that for deformation problems, a formal solution is somehow easy to produce whereas Corollary \ref{c4.1} tells us that among formal solutions, we can extract a convergent one.  
\end{rem}
\section{Perspectives}\label{s6}

In this final section, we summarize what we did in this paper and in \cite{3} in a more general setting in terms of associated dgLas (we also refer the reader to \cite{15} for a version without group actions). 

To start, we consider the deformation problem of an analytic object $X_0$, whose associated controlling differential graded Lie algebra is $(\mathfrak{g}_*,d)$. As usual, the space of infinitesimal deformations and that of obstructions are the first and the second cohomology of $\mathfrak{g}_*$, i.e. $H^1(\mathfrak{g}_*)$ and  $H^2(\mathfrak{g}_*)$, respectively. Let \begin{align*}
MC_{\mathfrak{g}_*} :\mathfrak{g}_1 &\rightarrow \mathfrak{g}_2\\
\alpha &\mapsto d\alpha +\frac{1}{2}[ \alpha, \alpha]
\end{align*} be the Maurer-Cartan equation associated to $\mathfrak{g}_*$. Any subgroup $G$ of the automorphism group of $X_0$ induces a natural $G$-action on each component of $\mathfrak{g}_*$ compatible with the differential $d$. We assume further that there are good analytic structures on $\mathfrak{g}_0$,  $\mathfrak{g}_1$ and $\mathfrak{g}_2$ where the implicit function theorem is available (for example, Banach analytic spaces) and there exists a $G$-invariant metric on $\mathfrak{g}_*$, with respect to which we are able to compute the formal adjoint $d^*$ of degree $-1$. Let us denote $\square := dd^*+d^*d$. Supposedly, we have a decomposition 
\begin{align*}
\mathfrak{g}_1 &=\ker\square \bigoplus \mathrm{Im}\square 
\end{align*} together with two linear operators:
\begin{enumerate}
\item[(i)] The ``Green operator": $\mathcal{G}:\; \mathfrak{g}_1 \rightarrow \mathrm{Im}\square $,
\item[(ii)] The ``harmonic projection": $\mathrm{P}_{\ker\square}:\;\mathfrak{g}_1  \rightarrow \ker\square$
\end{enumerate} such that $$\mathrm{Id}_{\mathfrak{g}_1}= \mathrm{P}_{\ker\square}+\square\mathcal{G}$$ and $\ker\square $ can be naturally identified with $H^1(\mathfrak{g}_*)$. Consider the following ``Kuranishi map"
\begin{align*}
\mathcal{K} :\mathfrak{g}_1 &\rightarrow \mathfrak{g}_1\\
\alpha &\mapsto\alpha +\frac{1}{2}d^*\mathcal{G} [\alpha, \alpha].
\end{align*}
\begin{thm} There exists a compatible $G$-action on the local moduli space of $X_0$.
\end{thm}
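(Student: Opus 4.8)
The plan is to transplant the proof of Theorem \ref{t4.1} into this abstract setting, with $d$ playing the role of $D_E$, the Laplacian $\square$ that of $\square_E$, and the Maurer--Cartan map $MC_{\mathfrak{g}_*}$ that of $\mathcal{P}_{D_E}$. First I would introduce the Kuranishi locus
$$Q := \left\{ \alpha \in \mathfrak{g}_1 \mid \|\alpha\| < \epsilon,\ d^*\alpha = 0,\ d^*MC_{\mathfrak{g}_*}(\alpha) = 0 \right\}.$$
Since $d^*\alpha = 0$ also gives $dd^*\alpha = 0$, every $\alpha \in Q$ satisfies the elliptic-type equation $\square\alpha + \tfrac{1}{2}d^*[\alpha,\alpha] = 0$, the exact analogue of the equation cut out in the proof of Theorem \ref{t4.1}, which in the concrete cases is where elliptic regularity enters.

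Next I would prove that $Q$ is a direct finite-dimensional submanifold of a neighbourhood of $0$ in $\mathfrak{g}_1$. To that end, consider the auxiliary analytic map
\begin{align*}
\gamma : \mathfrak{g}_1 &\rightarrow \ker\square \oplus d^*\mathfrak{g}_1 \oplus d^*\mathfrak{g}_2 \\
\alpha &\mapsto \left( \mathrm{P}_{\ker\square}(\alpha),\ d^*\alpha,\ d^*MC_{\mathfrak{g}_*}(\alpha) \right),
\end{align*}
whose differential at the origin is $d\gamma_0 = (\mathrm{P}_{\ker\square}, d^*, d^*d)$. Using the identity $\mathrm{Id}_{\mathfrak{g}_1} = \mathrm{P}_{\ker\square} + \square\mathcal{G}$ one writes down an explicit inverse of $d\gamma_0$ in terms of $\mathcal{G}$ and $d$, exactly as in Theorem \ref{t4.1}; the standing hypothesis that an inverse function theorem is available in the chosen Banach-analytic category then makes $\gamma$ a local analytic isomorphism at $0$. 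Consequently $Q = \gamma^{-1}(\ker\square \times 0 \times 0)$ locally, and since $\ker\square \cong H^1(\mathfrak{g}_*)$ is finite-dimensional, the claim holds for $\epsilon$ small enough.

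I would then install the $G$-action. Because $d$ is $G$-equivariant and the metric is $G$-invariant, the adjoint $d^*$, the Laplacian $\square$, the Green operator $\mathcal{G}$ and the harmonic projection $\mathrm{P}_{\ker\square}$ are all $G$-equivariant, while the bracket is $G$-equivariant by naturality, so $MC_{\mathfrak{g}_*}$ is too. Hence every component of $\gamma$ is $G$-equivariant, whence $\gamma$ itself is, and the target subspace $\ker\square \times 0 \times 0$ is $G$-invariant. After shrinking if necessary, $Q$ therefore inherits a $G$-action; putting $T := Q \cap MC_{\mathfrak{g}_*}^{-1}(0)$ yields a germ $(T,0)$ with a $G$-action together with a $G$-equivariant inclusion into $(MC_{\mathfrak{g}_*}^{-1}(0),0)$. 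This $(T,0)$ is the semi-universal base for the deformation problem of $X_0$, i.e.\ its local moduli space, and by construction it carries the sought compatible $G$-action; compatibility with the semi-universal family is checked verbatim as at the end of the proof of Theorem \ref{t4.1}.

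The hard part is not this bookkeeping but the two points where the abstract hypotheses must genuinely bite. Analytically, a truly $G$-invariant metric --- and hence the adjoint $d^*$ and the Hodge decomposition --- exists only when $G$ is compact; for $G$ complex reductive one must run the above for a maximal compact subgroup $K$, obtaining a $K$-equivariant $(T,0)$, and then extend the local $K$-action to a local $G$-action on $(T,0)$ and on $(MC_{\mathfrak{g}_*}^{-1}(0),0)$ by means of \cite[Theorem 5.1]{3}, necessarily trading global actions for local ones as in Corollary \ref{c4.1}. This extension, which secretly relies on the convergence furnished by the $G$-equivariant Hodge theory, is the real obstacle: everything else is a formal mirror of Theorem \ref{t4.1}.
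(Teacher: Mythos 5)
Your proposal is correct and is in substance the paper's own argument: a finite-dimensional Hodge-theoretic slice through $0$ in $\mathfrak{g}_1$, intersected with the Maurer--Cartan locus, with the $G$-action falling out of the $G$-equivariance of $d$, $d^*$, $\square$, $\mathcal{G}$, $\mathrm{P}_{\ker\square}$ and the bracket. The only difference is presentational: you re-run the proof of Theorem \ref{t4.1} verbatim, cutting the slice as $Q=\lbrace d^*\alpha=0,\ d^*MC_{\mathfrak{g}_*}(\alpha)=0\rbrace$ and invoking the inverse function theorem for the auxiliary map $\gamma$, whereas the paper compresses this by defining the slice as the zero locus $N=\lbrace(\mathcal{K}-\mathrm{P}_{\ker\square})(\alpha)=0\rbrace$ of the abstract Kuranishi map and citing \cite[Theorem 3.1]{15} for the verification that $(N,0)\cap(MC_{\mathfrak{g}_*}^{-1}(0),0)$ is the Kuranishi space. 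These describe the same germ --- compare the remark after Theorem \ref{t4.1}, which identifies the chart $\mathrm{P}_{\mathcal{H}^{0,1}}$ on $Q$ with the restriction of the Kuranishi map --- so nothing is lost either way; your version spells out what the citation hides, the paper's version avoids redoing the implicit-function bookkeeping in a setting where (as you implicitly notice) the target decomposition $\ker\square\oplus d^*\mathfrak{g}_1\oplus d^*\mathfrak{g}_2$ is not literally among the stated hypotheses.

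One caveat about your final paragraph: it misjudges the scope of this theorem. In the abstract setting of this section, the existence of a $G$-invariant metric, of $d^*$, and of the splitting $\mathrm{Id}_{\mathfrak{g}_1}=\mathrm{P}_{\ker\square}+\square\mathcal{G}$ are \emph{standing hypotheses}, for an arbitrary subgroup $G$ of the automorphism group; so the compact-versus-reductive issue and the passage through a maximal compact subgroup via \cite[Theorem 5.1]{3} are not part of this proof at all. That reduction belongs to the concrete instantiations (Theorem \ref{t4.1} and Corollary \ref{c4.1}), where one must actually \emph{verify} the hypotheses; here the paper relegates exactly this point to the remark following the theorem (``in general, we do not have such powerful tools''). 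So what you call ``the real obstacle'' is an obstacle to applying the theorem, not a gap in proving it.
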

\begin{proof} Let us denote by $N$ the following space 
$$\lbrace \alpha \in  \mathfrak{g}_1 \mid \left( \mathcal{K}-\mathrm{P}_{\ker\square}\right)(\alpha)=0 \rbrace.$$
Then it can be checked that the germ of analytic space $$(T,0):=(N,0)\cap(MC_{\mathfrak{g}}^{-1}(0),0)$$ is the desired ``Kuranishi space" (see \cite[Theorem 3.1]{15} for such a verification). The existence of group operations on $(T,0)$ follows immediately from the $G$-equivariance of all the maps and of all the operators involved. 
\end{proof}
\begin{rem} The key point here is the existence of the $G$-invariant metric and that of the splitting $$\mathrm{Id}_{\mathfrak{g}_1}= \mathrm{P}_{\ker\square}+\square\mathcal{G}.$$ The former is assured if $G$ is a compact Lie group by the unitary trick while the latter can come from the Hodge theory if we deal with complex compact manifolds. In general, we do not have such powerful tools. 
\end{rem}
The existence of reductive group operations on the Kuranishi space of complex compact manifolds (cf. \cite{3}) and that on the Kuranishi space of holomorphic vector bundles, dealt in this paper, can be thought of as living illustrations of the following philosophy. 

\textit{\enquote{Reductive subgroups of the automorphism group of the analytic object under deformation can be (at least locally) analytically extended to its semi-universal deformation.}}

In other words, there should be a compatible extended action on the ``Kuranishi space", which permutes nearby complex structures and the initial group action might be regarded as the stabilizer group with respect to the prescribed complex structure (corresponding to the reference point).  The formal aspect of this philosophy was systematically treated in the groundbreaking work of D. S. Rim, as mentioned in the introduction, in which a formal extendability of reductive actions is guaranteed, unique up to non-canonical equivariant isomorphisms, for any homogeneous fibred category in groupoid. However, the convergence of his construction, which is necessarily required in the analytic setting, is extremely hard to prove even in simple cases. Therefore, analytically speaking, a rigorously mathematical formulation of this philosophy might be a good problem to work on. 

\bibliographystyle{amsplain}
{}

\end{document}